\theoremstyle{definition}
\newtheorem{defn}{{\bf Definition}}[section]
\newtheorem{rmk}[defn]{{\bf Remark}}
\theoremstyle{theorem}
\newtheorem{lemma}[defn]{{\bf Lemma}}
\newtheorem{theorem}[defn]{{\bf Theorem}}
\newtheorem{prop}[defn]{{\bf Proposition}}
\DeclareMathOperator{\N}{\mathbb{N}_0}
\DeclareMathOperator{\Sym}{\mathrm{Sym}}
\DeclareMathOperator{\Aut}{Aut}
\DeclareMathOperator{\st}{st}
\DeclareMathOperator{\St}{St}
\DeclareMathOperator{\rst}{rist}
\newcommand{\LD}{\langle}
\newcommand{\RD}{\rangle}
\newcommand{\intseg}[2]{{[#1, #2]}} 
\numberwithin{equation}{section}
\DeclareMathOperator{\bp}{Bas}
\newcommand{\cref}[2]{\hyperref[#2]{#1~\ref*{#2}}}
\title[Maximal subgroups of iterated monodromy groups]{Maximal subgroups of a family of \\
iterated monodromy groups 
}
\author[K. Rajeev]{Karthika Rajeev} 
\address{Karthika Rajeev: Fakult\"at f\"ur Mathematik,
Universit\"at Bielefeld,
33501 Bielefeld, Germany}
\email{krajeev@math.uni-bielefeld.de}
\author[A. Thillaisundaram]{Anitha Thillaisundaram} 
\address{Anitha Thillaisundaram: Centre for Mathematical Sciences, Lund University,  
223 62 Lund, Sweden}
\email{anitha.thillaisundaram@math.lu.se}
\date{\today}
\thanks{This research was partially conducted in the framework of the Deutsche 
Forschungsgemeinschaft (DFG, German Research Foundation)-funded research 
training group GRK 2240: Algebro-Geometric Methods in Algebra, 
Arithmetic and Topology and by the DFG – Project-ID 491392403 – TRR 358. The second author thanks the Heinrich-Heine-University of D\"{u}sseldorf and Bielefeld University for their hospitality.}
\keywords{Groups acting on rooted trees, iterated monodromy groups, weakly branch groups, maximal subgroups}
\subjclass[2010]{Primary  20E08;  Secondary 20E28}
\begin{document}

\maketitle

\begin{abstract}
The Basilica group is a well-known 2-generated weakly branch, but not branch, group acting on the binary rooted tree. Recently a more general form of the Basilica group has been investigated by Petschick and Rajeev, which is an $s$-generated weakly branch, but not branch, group that acts on the $m$-adic tree, for $s,m\ge 2$. A  larger family of groups, which contains these generalised Basilica groups, is the family of iterated monodromy groups. With the new developments by Francoeur, the study of the existence of maximal subgroups of infinite index has been extended from branch groups to weakly branch groups. Here we show that a subfamily of iterated monodromy groups, which more closely resemble the generalised Basilica groups, have maximal subgroups only of finite index.
\end{abstract}

\section{Introduction}

Groups acting on rooted trees have drawn a great deal of attention over the last couple of decades because they exhibit prominent features and solve several long-standing problems in group theory. The initial examples studied were Grigorchuk's groups of intermediate word growth (\cite{Gri80}; answering Milnor's question) and Gupta and Sidki's examples of finitely generated infinite $p$-groups (\cite{GS83}; providing an explicit family of $2$-generated counterexamples to the general Burnside problem). Ever since, attempts have been made to characterise and generalise the groups of automorphisms of rooted trees. Today, the Grigorchuk groups and the Gupta--Sidki groups are known as the first examples of groups in the family of \emph{branch groups}. Branch groups are groups acting level-transitively on a spherically homogeneous rooted tree~$T$ and having subnormal subgroups similar to that of the full automorphism group $\Aut T$ of the tree~$T$; see \cref{Section}{sec:pre} for definitions. The groups studied in this paper belong to a more general class of groups, the \emph{weakly branch groups}, obtained by weakening some of the algebraic properties of branch groups; cf.~\cite{Handbook}.

The Basilica group is a $2$-generated weakly branch, but not branch, group acting on the binary rooted tree, which was introduced by Grigorchuk and \.{Z}uk in \cite{GZ01} and \cite{GZ02}. 
It is the first known example of an amenable \cite{BV05} but not sub-exponentially amenable group \cite{GZ02}. In contrast to the Grigorchuk and the Gupta--Sidki groups, the Basilica group is torsion-free and has exponential word growth \cite{GZ02}. Moreover, it is the iterated monodromy group of the complex polynomial $z^2-1$; see~\cite[Section~6.12.1]{Nekrashevych}.
The generators of the Basilica group are recursively defined as follows:
  \begin{align*}
      	a = (1 , b) \quad\text{ and }\quad b = (1, a) \sigma,
  \end{align*}
  where $\sigma$ is the cyclic permutation which swaps the subtrees rooted at the first level of the binary rooted tree, and $(x,y)$ represents the independent action on the two maximal subtrees, where $x,y\in \Aut T$. 
  Recently, Petschick and Rajeev~\cite{PR} introduced a construction which relates the Basilica group and the one-generated dyadic odometer $\mathcal{O}_2$ (also known as the adding machine). Let $m, \,s \geq 2$ be integers and let $G$ be a subgroup of the automorphism group $\Aut T$ of the $m$-adic tree~$T$. The $s$th Basilica group of $G$ is given by $\bp_s(G) = \LD\beta^s_i(g)\mid g \in G, \, i \in \{0,1,\dots, s-1\}\,\RD$, where $\beta_i^s: \Aut T \rightarrow \Aut T$ are monomorphisms given by
\begin{align*}
    \beta^s_i(g) &= (1, \dots, 1,\beta^s_{i-1}(g)) &\text{ for } i \in \{1,\dots,s-1\},\\
		\beta^s_0(g) &=  (\beta^s_{s-1}(g_0), \dots, \beta^s_{s-1}(g_{m-1}))g^\epsilon,
\end{align*}
 where $g_x$ is the restriction of~$g$ to the subtree rooted at a first-level vertex $x\in \{0,\ldots,m-1\}$, and  $g^\epsilon$ is the local action of the element~$g$ at the root of~$T$. (In~\cite{PR} the generators $\beta_i^s(g)$, for $i \in \{1,\dots,s-1\}$, are defined along the leftmost spine and the element $g^\epsilon$ is acting from the left, however, the corresponding $s$th Basilica group is isomorphic to the one defined here).
 We obtain the classical Basilica group by applying the operator $\bp_2$ to the dyadic odometer as follows:  let $c = (1,c) \sigma$ be the automorphism of the binary rooted tree generating the dyadic odometer. Then the generators of the Basilica group are given by
  \begin{align*}
      a = \beta_1^2(c)\quad \text{ and }\quad b = \beta_0^2(c).
  \end{align*}
 This gives a natural generalisation of the Basilica group given by $\bp_s(\mathcal{O}_m)$ for every pair of integers $m\, , s \geq 2$. Here $\mathcal{O}_m$ is the $m$-adic odometer, which is an embedding of the infinite cyclic group into the automorphism group of the $m$-adic tree~$T$, and is generated by 
\[
   c = (1,\overset{m-1}\dots,1,c)\sigma
 \]
 where $\sigma = (0 \; 1 \; \cdots \; m-1)$ is the $m$-cycle that cyclically permutes the $m$ subtrees rooted at the first level of~$T$. The generalised Basilica groups $\bp_s(\mathcal{O}_m)$ resemble the classical Basilica group, as they are weakly branch, but not branch, torsion-free groups of exponential word growth \cite[Theorem~1.6]{PR}. They are also weakly regular branch over their derived subgroup. 

Now, as we will see below, the generalised Basilica groups lie in the set of iterated monodromy groups of post-critically finite complex polynomials, where a polynomial $f$ is \emph{post-critically finite} if the orbit of the critical point 0 under iterations of $f$ is finite. Iterated monodromy groups are more naturally defined in terms of self-coverings of topological spaces, as done in~\cite[Chapter~5]{Nekrashevych}, and these groups are of interest since they encode information about the dynamics of such self-coverings. However below we will only abstractly define a subfamily of iterated monodromy groups that are of interest to us, and we only study their algebraic properties. Therefore we refer the reader to \cite{Nekrashevych} for more information about iterated monodromy groups in general.

 We note  that the iterated monodromy groups of the quadratic polynomials $f(z) = z^2+c$ have been studied in more depth. In \cite{BN08}, Bartholdi and Nekrashevych identified the iterated monodromy groups of post-critically finite quadratic polynomials $f(z) = z^2+c$. 
 Among those groups we are interested in the groups $\mathfrak{K(v)}$, which are defined as below. 

 Let $s \geq 2$ and ${\mathfrak{v}} = x_0\cdots x_{s-2}$ be a word over the alphabet $X = \{0,1\}.$ The group $\mathfrak{K(v)}$ is a subgroup of automorphisms of the binary rooted tree generated by the elements $a_0,\dots ,a_{s-1}$,  which are defined as 
 \begin{align*}
    a_0 = (1,a_{s-1}) \sigma, && a_{i+1} = \begin{cases}
                                   (a_{i},1) &\text{if } x_i = 0, \\
                                   (1, a_{i}) &\text{if } x_i =1,
                                \end{cases}
 \end{align*}
 for $i \in {\{0,1,\ldots, s-2\}}$.
 The quadratic polynomial associated to $\mathfrak{K(v)}$ can be precisely defined in terms of~${\mathfrak{v}}$; see 
 \cite[Section 5]{BN08}. Note that if $s=1$, 
 then ${\mathfrak{v}}$ is the  empty word, then $\mathfrak{K}(\varnothing)$ is just the cyclic group  $\mathbb{Z}$.
 Notice that $\mathfrak{K}(1)$ is the classical Basilica group. Furthermore, by setting ${\mathfrak{v}} = 1 \overset{s'}\dots 1$, we get that $\mathfrak{K(v)} = \bp_{s'+1}(\mathcal{O}_2)$, for ${s'}\in \mathbb{N}$.  The group $\mathfrak{K(v)}$ has a corresponding so-called kneading sequence, by which the symbolic dynamics of  quadratic complex polynomials are usually studied; see~\cite{BN08} for more information.
 
 The recurrence relations in the definition of $\mathfrak{K(v)}$ above suggests that one can generalise this notion to groups acting on the $m$-adic tree, for any $m\ge 2$. Let ${\mathfrak{v}} = x_0 \cdots x_{s-2}$ be a word in the alphabet $X = \{0,1,\dots,m-1\}$. Define
\begin{align*}
     a_0 = (1,\dots,1,a_{s-1}) \sigma, {\qquad\text{and}\qquad a_{i+1} = (1,\overset{x_i-1}{\ldots},1,                                   a_{i}, 1,\ldots,1),}
\end{align*}
for $i \in {\{0,1,\ldots, s-2\}}$, and $\mathfrak{K(v)}$ as the group defined by the elements $a_0,\dots,a_{s-1}$. Then, for ${\mathfrak{v}} = 1 \overset{s'}\dots 1$, we have $\mathfrak{K(v)} = \bp_{s'+1}(\mathcal{O}_m)$. 

 We establish the following basic properties of the groups $\mathfrak{K(v)}$.

 \begin{theorem}\label{thm:properties}
  Let $m,s\ge 2$  
  be positive integers, and let ${\mathfrak{v}} = x_0\cdots x_{s-2}$ be a word in the alphabet $X=\{0,1 ,\ldots, m-1\}$. Then for the group $\mathfrak{K(v)}$ defined by ${\mathfrak{v}}$, the following assertions hold: 
\begin{itemize}
\item[(i)] $\mathfrak{K(v)}$ is an iterated monodromy group of a post-critically finite polynomial; 
    \item[(ii)] $\mathfrak{K(v)}$ is super strongly fractal and level-transitive;
    \item[(iii)] $\mathfrak{K(v)}$ is not branch, but is weakly regular branch over its commutator subgroup $\mathfrak{K(v)}'$;
    \item[(iv)] $\mathfrak{K(v)}/\mathfrak{K(v)}' \cong \mathbb{Z}^s$;
    \item[(v)] $\mathfrak{K(v)}$ is torsion-free;
    \item[(vi)] $\mathfrak{K(v)}$ is contracting;
    \item[(vii)] $\mathfrak{K(v)}$ has exponential word growth.
\end{itemize}
\end{theorem}
\noindent  The above results, apart from part (ii), was known for the case $m=2$; see \cite[Section~3]{BN08}. We note that the fact that the groups $\mathfrak{K(v)}$ are contracting already follows from \cite{BN03},
but we give a self-contained proof which yields more information about the contracting property of the groups. The property of exponential word growth will not be utilised in this paper, so we instead refer the reader to \cite[Chapter~10]{Handbook} for the definition.

In this paper, we study the maximal subgroups of  
$\mathfrak{K(v)}$ for constant words ${\mathfrak{v}}$;
the case of non-constant words prove to be much more difficult and would require a different approach.
The study of maximal subgroups of branch groups was initiated by Pervova~\cite{Per00}, where she proved that the torsion Grigorchuk groups do not contain maximal subgroups of infinite index. Thenceforth, attempts have been made to generalise the results and techniques from \cite{Per00}, for instance see~\cite{AKT13},~\cite{KT18},  and \cite{Fra20}. Among which, our interest lies in the work of Francoeur~\cite{Fra20} (or see \cite[Section 8.4]{Fra19}), who provided a strategy to study the maximal subgroups of weakly branch groups. 
 In particular, he proved that the classical Basilica group does not contain maximal subgroups of infinite index. Following this technique we prove that 
 $\mathfrak{K(v)}$, for constant words ${\mathfrak{v}}$, do not admit maximal subgroups of infinite index.
 
\begin{theorem} \label{thm main}
  Let $m,s\ge 2$    be positive integers, and let ${\mathfrak{v}}=t\,\overset{s-1}{\dots}\,t$ be the constant word for some $t\in {\{0,1,\ldots,m-1\}}$. 
  Then  the group $\mathfrak{K(v)}$
  does not admit a maximal subgroup of infinite index.
\end{theorem}

 \noindent Due to the different properties of the family $\mathfrak{K(v)}$, 
 the final stages of our proof differ from previously seen results; compare \cref{Theorem}{prop prodense}.  This is also the first time that maximal subgroups of a weakly branch, but not branch, group~$G$ have been considered for a group~$G$ with more than $2$ generators.
 
 It is interesting to note that there are currently no examples of finitely generated weakly branch, but not branch, groups  with maximal subgroups of infinite index. There are only examples of finitely generated branch groups with maximal subgroups of infinite index; see \cite{Bondarenko,FG, KS}. It remains to be seen whether being a finitely generated weakly branch group with maximal subgroups of infinite index implies the group is branch.

 Furthermore, in all known examples of finitely generated weakly branch, but not branch, groups with maximal subgroups only of finite index, these groups have maximal subgroups that are not normal; compare Remark~\ref{rem:normal} and \cites{FT, DNT}. Therefore it is also natural to ask if there exists a finitely generated weakly branch, but not branch, group with all maximal subgroups of finite index and normal.

\smallskip

\noindent\textit{Organisation}. \cref{Section}{sec:pre} contains preliminary material on groups acting on the $m$-adic tree, and \cref{Section}{sec: basic properties} establishes basic properties of the groups $\mathfrak{K(v)}$, that is, we prove Theorem~\ref{thm:properties}. In \cref{Section}{sec: length reduction}, we record some length reducing properties of 
a large subfamily of the groups $\mathfrak{K(v)}$,
and in \cref{Section}{sec: maximal subgroups} we prove \cref{Theorem}{thm main}.

\subsection*{Acknowledgements}
We are grateful to the referee  for their valuable comments and for suggesting the iterated monodromy groups, and we thank G.\,A. Fern\'{a}ndez-Alcober and B.~Klopsch for their  helpful feedback.

\smallskip


\section{Preliminaries}\label{sec:pre}

 By $\mathbb{N}$ we denote the set of positive integers, and by $\N$ the set of non-negative integers.

  Let $m\in\mathbb{N}_{\ge 2}$ and let $T = T_m$ be the $m$-adic  tree,
  that is, a rooted tree where all vertices have $m$~children. Using the
  alphabet $X = \{0,1,\ldots,m-1\}$, the vertices~$u_\omega$ of~$T$ are
  labelled bijectively by the elements~$\omega$ of the free
  monoid~$X^*$ in the following natural way:  the root of~$T$
  is labelled  by the empty word, and is denoted by~$\epsilon$, 
  and for each word
  $\omega \in X^*$ and letter $x \in X$ there is an edge
  connecting $u_\omega$ to~$u_{\omega x}$.  More generally, we say
  that $u_\omega$ precedes $u_\lambda$ whenever $\omega$ is a prefix of $\lambda$.

  There is a natural length function on~$X^*$, which is defined as follows: the words~$\omega$ of length $\lvert \omega \rvert = n$, representing vertices
  $u_\omega$ that are at distance~$n$ from the root, are the $n$th
  level vertices and constitute the \textit{$n$th layer} of the tree.

  We denote by $T_u$ the full rooted subtree of~$T$ that has its root at
  a vertex~$u$ and includes all vertices succeeding~$u$. For any
  two vertices $u = u_\omega$ and $v = u_\lambda$, the map
  $u_{\omega \tau} \mapsto u_{\lambda \tau}$, induced by replacing the
  prefix $\omega$ by $\lambda$, yields an isomorphism between the
  subtrees $T_u$ and~$T_v$.  

  Now each $f\in \Aut T$ fixes the root, and the orbits of
  $\Aut T$ on the vertices of the tree~$T$ are the layers of the tree~$T$.  
  The image of a vertex~$u$ under
  $f$ will be denoted by~$uf$. 
  The automorphism~$f$ induces a faithful action
  on~$X^*$ given by
  $(u_\omega)f = u_{\omega f}$.  For $\omega \in X^*$ and
  $x \in X$ we have $(\omega x)f = (\omega f) x'$, for $x' \in X$ 
  uniquely determined by $\omega$ and~$f$. This induces a permutation~$f^\omega$ of~$X$ which satisfies
  \[
  (\omega x) f= (\omega f)xf^\omega, \quad \text{and consequently}
  \quad   (u_{\omega x})f = u_{(\omega f) xf^\omega} .
  \]

   More generally, for an automorphism~$f$ of~$T$, since the layers are invariant under~$f$, for $u\in X^*$, the equation
    \[
    (uv)f=(uf)vf_u \qquad{\text{for every }v\in X^*,}
    \]
    defines a unique automorphism~$f_u$ of~$T$ called the \emph{section of $f$ at $u$}. This automorphism can be viewed as the automorphism of~$T$ induced  by $f$ upon identifying the rooted subtrees of~$T$ at the vertices $u$ and $uf$ with the tree~$T$. As seen here,  we often do not differentiate between $X^*$ and vertices of~$T$. 

 \subsection{Subgroups of $\Aut T$}\label{subsec:2.1}

 Let $G$ be a subgroup of $\Aut T$ acting \textit{level-transitively}, that is, transitively on every layer of~$T$. The
 \textit{vertex stabiliser} $\st_G(u)$ is the subgroup
 consisting of elements in~$G$ that fix the vertex~$u$.  For
 $n \in \mathbb{N}$, the \textit{$n$th level stabiliser}
  $\mathrm{St}_G(n)= \bigcap_{\lvert \omega \rvert =n}
  \st_G(u_\omega)$
 is the subgroup consisting of automorphisms that fix all vertices at
 level~$n$.  

 Each $g\in \mathrm{St}_{\Aut T} (n)$ can be 
 completely determined in terms of its restrictions   to the subtrees rooted at vertices at level~$n$.  There is a natural isomorphism
   \[
     \psi_n \colon \mathrm{St}_{\Aut T}(n) \longrightarrow
     \prod\nolimits_{\lvert \omega \rvert = n} \Aut T_{u_\omega}
     \cong \Aut T \times \overset{m^n}{\cdots} \times
     \Aut T
   \]
 defined by sending $g\in \mathrm{St}_{\Aut T} (n)$ to its tuple of $m^n$ sections. For conciseness, we will omit the use of $\psi_1$, and simply write $g=(g_1,\ldots,g_m)$ for $g\in\text{St}_{\Aut T}(1)$.

 Let  $\omega\in X^n$ be of length $n$. We further define 
 \[
  \varphi_\omega :\st_{\Aut T}(u_\omega) \longrightarrow \Aut T_{u_\omega} \cong \Aut T
 \]
 to be the map sending $f\in \st_{\Aut T}(u_\omega)$ to the section~$f_{u_\omega}$.

 A group $G \leq \Aut T$ is said to be \emph{self-similar}  if for all $f\in G$ and  all $\omega\in X^*$ the  section~$f_{u_{\omega}}$ belongs to~$G$.  We will denote $G_{\omega}$ to be the subgroup $\varphi_{\omega}(\st_G(u_{\omega}))$.

Recall that for a group $G$ generated by a finite symmetric subset~$S$
(i.e.\ a set for which $S = S^{-1}$),  for every $g \in G$, the \textit{length} of~$g$ with respect to $S$ is
\[
|g| = \min\{n \geq 0 \mid g = s_{1} \cdots s_{n}, \text{ for }s_{1}, \dots, s_{n} \in S\}.
\]
Now assume that $G$ is a self-similar subgroup of $\Aut T$.
Then for every $g\in G$ and every $n\in\N\cup\{0\}$, let
\[
\ell_n(g) = \max \{ |g_u| \mid |u|=n \}.
\]
If there exist $\lambda<1$ and $C,L\in\mathbb{N}$ such that
\[
\ell_n(g) \le \lambda |g|+C,
\quad
\text{for every $n>L$ and every $g\in G$,}
\]
then we say that the group $G$ is \emph{contracting} with respect to $S$.

Let $G$ be a subgroup of $\Aut T$ acting level-transitively. Here the vertex stabilisers at every level are conjugate under~$G$.  We say that the group $G$ is \textit{fractal} if $G_{\omega}:=\varphi_\omega(\st_G(u_\omega))=G$ for every $\omega\in X^*$, after the natural identification of subtrees. Furthermore we say that the group~$G$ is \emph{super strongly fractal} if, for each $n\in \mathbb{N}$, we have $\varphi_\omega(\text{St}_G(n))=G$ for every word  $\omega\in X^n$ of length~$n$.

 The \textit{rigid vertex stabiliser} of $u$ in $G$ is the subgroup
$\rst_G(u)$ consisting of all automorphisms in~$G$ that fix
 all vertices of~$T$ not succeeding~$u$. The \textit{rigid $n$th level stabiliser} is the direct product of the rigid vertex stabilisers of the vertices at level~$n$:
  \[
    \mathrm{Rist}_G(n) = \prod\nolimits_{\lvert \omega \rvert = n}
   \rst_G(u_\omega) \trianglelefteq G.
  \]
    
 We recall that a level-transitive group~$G$ is a \emph{branch group} if $\mathrm{Rist}_G(n)$ has finite index in~$G$ for every $n \in \mathbb{N}$; and $G$ is \emph{weakly branch} if $\mathrm{Rist}_G(n)$ is non-trivial for every $n \in \mathbb{N}$. If, in addition,  the group $G$ is self-similar
  and there exists a subgroup $1 \ne K \leq G$ with
 $K\times \overset{m}\cdots \times K \subseteq {\psi_1}(\St_K(1))$ and
 $\lvert G : K \rvert < \infty$, then $G$ is said to be \emph{regular
 branch over~$K$}. If in the previous definition the condition $\lvert G : K \rvert < \infty$ is omitted, then $G$ is said to be \emph{weakly regular branch over}~$K$.

 \subsection{A basic result} Here we record  a general result that will be useful in the sequel. For $g\in \Aut T$, recall that $g^\epsilon$ denotes the  action induced by~$g$ at the root of~$T$.

 \begin{lemma}\label{lemma commutator product}
 For a self-similar group $G\le \Aut T$, let $z = (z_0,\dots,z_{m-1})z^\epsilon \in G'$. Then $z_0\cdots z_{m-1}\in G'$.
 \end{lemma}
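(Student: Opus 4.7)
The plan is to define a homomorphism $\mu\colon G \to G/G'$ by sending $g = (g_0, \dots, g_{m-1}) g^\epsilon$ to the coset $g_0 g_1 \cdots g_{m-1} \cdot G'$, and then simply observe that $z \in G'$ forces $\mu(z) = G'$, which is exactly the claim. By self-similarity each section $g_i$ lies in $G$, so this assignment is well-defined on all of $G$.

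The real content is in verifying that $\mu$ is a group homomorphism. For this I would start from the defining formula $f(uv) = f(u) f_u(v)$ recalled in the preliminaries, applied at the first level, to derive the multiplication rule
\[
(gh)_i = g_{h^\epsilon(i)} \, h_i \qquad \text{for all } i \in \{0,\dots,m-1\}.
\]
Taking the product of sections and passing to the abelian quotient $G/G'$, the factors may be reordered freely; since $h^\epsilon$ merely permutes the index set $\{0, \dots, m-1\}$, the reindexing $j = h^\epsilon(i)$ gives
\[
\mu(gh) = \prod_{i=0}^{m-1} g_{h^\epsilon(i)} h_i \cdot G' = \Bigl(\prod_{j=0}^{m-1} g_j \Bigr) \Bigl( \prod_{i=0}^{m-1} h_i \Bigr) G' = \mu(g)\,\mu(h).
\]

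Once $\mu$ is known to be a homomorphism into an abelian group, it must vanish on $G'$, and the lemma follows immediately: for any $z = (z_0,\dots,z_{m-1}) z^\epsilon \in G'$ we obtain $z_0 z_1 \cdots z_{m-1} G' = \mu(z) = G'$. The only delicate point in the argument is deriving the multiplication rule for sections and noticing that the root permutation $h^\epsilon$ is harmless after abelianising; nothing deeper is required.
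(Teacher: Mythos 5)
Your proof is correct, and it packages the argument a little differently from the paper. The paper reduces to a single commutator $[g,h]$ and expands it explicitly in terms of sections, observing that each $g_i^{\pm 1}$, $h_j^{\pm 1}$ occurs exactly once with each sign, so the product of the sections lies in $G'$; your route is to make the map $\mu\colon g\mapsto g_0\cdots g_{m-1}G'$ into a homomorphism $G\to G/G'$ and note that it must kill $G'$. The two arguments rest on the same computation (sections of a product are products of sections with a permutation of the index set, which is harmless after abelianising), but your formulation has the advantage that it treats an arbitrary element of $G'$ directly, whereas the paper's opening reduction ``it suffices to prove the result for a basic commutator'' itself tacitly uses exactly the multiplicativity you verify; so your version is, if anything, slightly more complete. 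One small point of care: your multiplication rule $(gh)_i = g_{h^\epsilon(i)}\,h_i$ corresponds to the composition convention $(gh)(w)=g(h(w))$, while the paper's own computations (e.g.\ in Lemma~\ref{lemma length red 1}) use the opposite convention, giving $(gh)_i = g_i\,h_{i^{g^\epsilon}}$; this is only a matter of convention, and since in $G/G'$ the permutation of indices merely reorders the factors, your verification that $\mu(gh)=\mu(g)\mu(h)$ goes through with either rule.
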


\begin{proof}
  It suffices to prove the result for a basic commutator $[g,h]$, where $g,h\in G$. Write $g=(g_0,\ldots,g_{m-1})g^\epsilon$ and  $h=(h_0,\ldots,h_{m-1})h^\epsilon$. For notational convenience, let us write $\tau=(g^\epsilon)^{-1}$ and $\kappa=(h^\epsilon)^{-1}$, and for $\alpha\in \text{Sym}(X)$ and $x\in X$ we write $x^\alpha$ for the image $\alpha(x)$ of $x$ under $\alpha$. As
  \begin{align*}
   &[g,h]\\
   &= \tau(g_0^{-1},\dots,g_{m-1}^{-1}) \kappa(h_0^{-1},\dots,h_{m-1}^{-1}) (g_0,\dots,g_{m-1})g^\epsilon(h_0,\dots,h_{m-1})h^\epsilon\\ 
    & =  (g_{0^\tau}^{-1},\dots,g_{(m-1)^\tau}^{-1}) (h_{0^{\tau\kappa}}^{-1},\dots,h_{(m-1)^{\tau\kappa}}^{-1}) (g_{0^{\tau\kappa}},\dots,g_{(m-1)^{\tau\kappa}})(h_{0^{\tau\kappa{g^\epsilon}}},\dots,h_{(m-1)^{\tau\kappa{g^\epsilon}}})\tau\kappa{g^\epsilon h^\epsilon},
  \end{align*}
  the result follows. 
\end{proof}


 \section{Properties of $\mathfrak{K(v)}$}\label{sec: basic properties}

 For any two integers $i,j,$ let $\intseg{i}{j}$ denote the set $\{i,i+1,\dots,j-1,j\}$. 
 In the following sections, we fix  
 $m,s\in\mathbb{N}_{\geq 2}$. 
 Let $X = \{0,1,\dots,m-1\}$ and let ${\mathfrak{v}}= x_0\cdots x_{s-2}$ be a word in~$X^*$. Recall that the group $\mathfrak{K(v)}$ is generated by the elements $a_0,\dots,a_{s-1}$, where  
 \begin{align*}
     a_0 = (1,\dots,1,a_{s-1}) \sigma, {\qquad\text{and}\qquad a_{i+1} = (1,\overset{x_i-1}{\ldots},1,                                   a_{i}, 1,\ldots,1),}
\end{align*}
for $i \in \intseg{0}{s-2}.$

To prove Theorem~\ref{thm:properties}(i), we use the following characterisation of iterated monodromy groups by Nekrashevych~\cite[{Theorem}~6.10.8]{Nekrashevych} according to the notation given in \cite[{Theorem}~5.8]{Jones}:
\begin{theorem}
    \label{thm:Jones}
    A subgroup $G\le\text{Aut } T$ is isomorphic to a standard action of the iterated monodromy group of a post-critically finite polynomial if and only if $G$ is conjugate in $\text{Aut }T$ to a group generated by a finite invertible automaton $A$ with the following properties:
    \begin{enumerate}
        \item [(1)] For each non-trivial $a\in A$, there is a unique $b\in A$ and $x\in X$ with ${b_x}=a$.
        \item [(2)] For each $a\in A$ and each cycle $(x_1\,x_2\,\cdots\,x_n)$ of the action of $a$ on $X$, the section $a_{x_i}$ is non-trivial for at most one~${x_i}$.
         \item [(3)] The multi-set of permutations defined by the set of states of $A$ acting on $X$ is tree-like.
          \item [(4)] Let $a_1\ne a_2$ be non-trivial states of $A$ with $\epsilon\ne v_1,v_2\in T$ satisfying $(a_i)_{v_i}=a_i$ and $a_i(v_i)=v_i$ for $i\in\{1,2\}$. Then there is no $h\in G$ with $h(v_1)=v_2$ and $h_{v_1}=h$.
    \end{enumerate}
\end{theorem}

We refer the reader to \cite{Nekrashevych} or \cite{Jones} for any unexplained terminology.
\begin{proof}[Proof of Theorem~\ref{thm:properties}(i)]
   We will show that the above four conditions hold for $\mathfrak{K(v)}$. Conditions (1) and (2) of Theorem~\ref{thm:Jones} are clear from the definition of the generators $a_i$. For condition (3), notice that $a_0$ is the only generator of $\mathfrak{K(v)}$ that acts non-trivially on $X$. Therefore, according to \cite[Section~5.4]{Jones}, the multi-set of permutations of $X$ is given by $\{\sigma\}$. Furthermore, the cycle diagram associated to the multi-set $\{\sigma\}$  is a oriented 2-dimensional CW-complex, whose set of 0-cells is $X$, and  with one 2-cell obtained by connecting the elements of $X$ by the action of~$\sigma$. Hence the cycle diagram is contractible. Thus, by \cite[Definition~5.7]{Jones} the multi-set of permutations defined by the set of states of $\mathfrak{K(v)}$ acting on~$X$ is tree-like.
    
    For condition (4), the restriction $(a_i)_{v_i}=a_i$ implies that the $v_i$ have to be at level $ns$ for some positive integer~$n$. However, for any $a_i$, after $i$ levels, the section is $a_0$, which does not then fix any vertex below it. So  we can never find vertices $v_i$ such that $a_i(v_i)=v_i$. So (4) vacuously holds.
\end{proof}

\begin{lemma}\label{lemma_k(v)_fractal_and_level-trans}
    The group $\mathfrak{K(v)}$ is super strongly fractal and level-transitive.
\end{lemma}

\begin{proof}
    Observe that $a_0^m=(a_{s-1},\dots,a_{s-1})$ and, for any $i \in \intseg{1}{s-1}$ and $j \in \intseg{0}{m-1}$, 
    \begin{align*}
        {\varphi_{j}\bigg(}a_i^{a_0^{j-x_{i-1}}}{\bigg)}=a_{i-1}.
    \end{align*}
 It follows that ${\varphi_x(\St}_{\mathfrak{K(v)}}(1))=\mathfrak{K(v)}$ for all $x \in X$. For $n\in \mathbb{N}$, we write $n=\ell s+r$, where $\ell\in \mathbb{N}_0$ and $r\in[0,s-1]$.  Using similar arguments with the elements 
    \[
    a_0^{m^{\ell+1}}, a_1^{m^{\ell+1}}, \ldots,  a_{r-1}^{m^{\ell+1}}, a_{r}^{m^{\ell}},\ldots,  a_{s-1}^{m^{\ell}}\in \St_{\mathfrak{K(v)}}(n),
    \]
    and their conjugates, we deduce that ${\varphi_w(\St}_{\mathfrak{K(v)}}(n))=\mathfrak{K(v)}$ for all $w \in X^n$. 
    
     The second statement is immediate from the fact that ${\varphi_x(\st}_{\mathfrak{K(v)}}(x))=\mathfrak{K(v)}$ for some $x \in X$, and since $a_0$, and hence $\mathfrak{K(v)}$, acts transitively on the first layer.
\end{proof}

Next we record an elementary but useful result. Recall that ${\mathfrak{v}} = x_0\cdots x_{s-2}$ is a word in~$X^*$.

\begin{lemma}\label{lem:commuting}
 For distinct $i,j\in[1,s-1]$ with $x_{i-1}\ne x_{j-1}$, we have $[a_i,a_j]=1$ in~$\mathfrak{K(v)}$. For $i>\ell\in[1,s-1]$ with $x_{i-1}= x_{\ell-1}$, we have $[a_i,a_\ell]=1$ if
$x_{i-d}\ne x_{\ell-d}
 $ for some $d\in[2,\ell]$  and $[a_i,a_\ell]\ne 1$ otherwise.
\end{lemma}

\begin{proof}
The first statement is a straightforward computation. For the next statement, note  that for  $i\in[1,s-1]$,
 \[
 [a_i,a_0]=\begin{cases}
     (a_{s-1}^{-1}a_{i-1}a_{s-1},1,\ldots,1,a_{i-1}^{-1})&\text{if }x_{i-1}= m-1,\\
      (1,\overset{x_{i-1}-1}\ldots,1,a_{i-1}^{-1},a_{i-1},1,\ldots,1)&\text{if }x_{i-1}\ne m-1,
 \end{cases}
 \]
 so $a_0$ does not commute with any other $a_i$. The result then follows, using the fact that 
 \[
 [a_i,a_\ell]=(1,\ldots,1,[a_{i-1},a_{\ell-1}],1,\ldots,1).\qedhere
 \]
\end{proof}

\begin{lemma} \label{lemma_k(v)_weakly_branch}
    The group $\mathfrak{K(v)}$ is weakly regular branch over $\mathfrak{K(v)}'$. 
\end{lemma}

\begin{proof}
    We use that $\mathfrak{K(v)}' = \langle [a_i,a_j] \mid i,j \in \intseg{0}{s-1}\rangle^{\mathfrak{K(v)}}$. We have
    \begin{align*}
        \Big[a_i^{a_0^{m-1-x_{i-1}}}, a_j^{a_0^{m-1-x_{j-1}}}\Big] &= (1,\dots,1,[a_{i-1},a_{j-1}]),\\
        [a_i, a_0^m]^{a_0^{m-1-x_{i-1}}} &= (1,\dots,1,[a_{i-1}, a_{s-1}]),
    \end{align*}
 for $i,j\in \intseg{1}{s-1}.$  Therefore, by transitivity and fractalness, we have $\mathfrak{K(v)}' \times \overset{m}\cdots \times \mathfrak{K(v)}' \leq \psi(\mathfrak{K(v)}')$. 
\end{proof}

For $i \in \intseg{0}{s-1}$, let $A_i = \langle a_j\mid {j\in[0,s-1]\backslash\{i\}} \rangle^{\mathfrak{K(v)}}$. 

\begin{lemma}\label{lemma_order_of_a_i}
    For $i \in \intseg{0}{s-1}$, 
    we have
    $\mathfrak{K(v)}/A_i \cong \mathbb{Z}$. In particular, the elements~$a_i$ have infinite order in $\mathfrak{K(v)}$.
\end{lemma}

\begin{proof}
    We  
    prove  simultaneously for all $i$ that $\mathfrak{K(v)}/A_i \cong \mathbb{Z}.$ Assume for a contradiction that, for some $n \in \mathbb{N}$ that $a_i^n \in A_i$, for some $i \in \intseg{0}{s-1}.$ Choose $n \in \mathbb{N}$ minimal with respect to the property that $a_i^n \in A_i$ for some $i \in \intseg{0}{s-1}$. If $i =0$, then $a_{0}^n \in \St_{\mathfrak{K(v)}}(1)$ and, in particular $n \equiv 0 {\pmod m}$. We have 
      \begin{align*}
          a_0^n = (a_{s-1}^{\nicefrac{n}{m}}, \dots, a_{s-1}^{\nicefrac{n}{m}}),
      \end{align*}
    and hence $a_{s-1}^{\nicefrac{n}{m}} \in A_{s-1}.$ This is a contradiction to the minimality of $n$. Now suppose that $i \neq 0$. Then by considering appropriate sections  
    of $a_i^n$, we see that $a_0^n \in A_0$, which cannot happen as {shown} above. Therefore, we conclude that ${\mathfrak{K(v)}}/A_i \cong \mathbb{Z}$ for all $i \in \intseg{0}{s-1}$.
\end{proof}

\begin{lemma}
    \label{lem:not-branch}
     We have  
     $\textup{Rist}_{\mathfrak{K(v)}}(1)=A_0$. Hence the group $\mathfrak{K(v)}$ is not branch.
\end{lemma}

\begin{proof}
Analogous to the proof of \cite[Theorem 3.5(i)]{DNT},
we clearly have $A_0\le \textup{Rist}_{\mathfrak{K(v)}}(1)\le \St_{\mathfrak{K(v)}}(1)=A_0\langle a_0^m \rangle$, where the last equality follows from the fact that $a_i\in\St_{\mathfrak{K(v)}}(1)$ for all $i\ne 0$, and that $a_0^n\in\St_{
\mathfrak{K(v)}}(1)$ if and only if $n\equiv 0\pmod m$. So
$\textup{Rist}_{\mathfrak{K(v)}}(1)=A_0\langle a_0^{mn} \rangle$ for some~$n$.
Since 
\[
\psi(A_0\langle a_0^{mn} \rangle)=(A_{s-1}\times \cdots \times A_{s-1})\langle (a_{s-1}^n,\ldots,a_{s-1}^n) \rangle
\]
and $a_{s-1}$ has infinite order modulo $A_{s-1}$ by \cref{Lemma}{lemma_order_of_a_i}, it follows from the definition of the rigid stabiliser that $n=0$.
Hence $\textup{Rist}_{\mathfrak{K(v)}}(1)=A_0$, which has infinite index in $\mathfrak{K(v)}$, so $\mathfrak{K(v)}$ is not branch.
\end{proof}

 We shall adopt the convention that the subscripts of the $a_i$'s are taken modulo~$s$.
 Set $S = \{a_i^{\pm 1} \mid i \in \intseg{0}{s-1}\}$ and then $\mathfrak{K(v)}= \langle S \rangle$.
 For each word $w \in S^*$, the length $|w|$ is the usual word length of $w$ over the alphabet~$S$. 
\begin{lemma}\label{lemma_exp_sum}
    Let $w$ be a word in~$S$ 
    representing the identity in $\mathfrak{K(v)}$. Then the exponent sum of $a_i$ in $w$ must be zero for all $i \in \intseg{0}{s-1}$.
\end{lemma}

\begin{proof}
   We proceed by induction on the length of $w$. Let $w$ be a non-trivial word in~$S$ 
    representing the identity in $\mathfrak{K(v)}$. It is immediate from \cref{Lemma}{lemma_order_of_a_i} that $|w| \geq 2$ and $w$ must contain non-trivial powers of at least two distinct $a_i,a_j \in S$. For the base case $|w|=2$, we have that $w=a_i^{\epsilon_i}a_j^{\epsilon_j}$ for distinct $i,j\in [0,s-1]$ and $\epsilon_i,\epsilon_j\in\{\pm 1\}$. From considering the image of~$w$ in $\mathfrak{K(v)}/A_i$, we obtain a contradiction to $\epsilon_i$ being non-zero.

   Assume that the result holds for all words of length $n \geq 2$, and let $|w|=n+1$. By realising~$w$ in $\mathfrak{K(v)}$, we can see that the exponent sum of $a_0$ in~$w$ must be zero modulo~$m$. 
    By abuse of notation, we write
    \[\psi(w) = (w_0,\dots,w_{m-1}),\]
    where~$w_i$ are reduced words determined by the appropriate sections of the letters of~$w$. Since~$w$ represents the identity in $\mathfrak{K(v)}$, each~$w_k$ also represents the identity in $\mathfrak{K(v)}$. It follows from the definition of the generators $a_i$ that $\sum \limits_{k=0}^{m-1}|w_{k}| \leq |w|$. In particular, if $|w_{k}| = |w|$, for some $k \in \intseg{0}{m-1}$, then the $w_j$ are trivial for all $j \in \intseg{0}{m-1}\backslash\{k\}$, otherwise $|w_{k}| < |w|$ for all ${k} \in \intseg{0}{m-1}$. 
    
    Assume that $|w_{k}| < |w|$ for all $k \in \intseg{0}{m-1}$. By the induction hypothesis, for all $i \in \intseg{0}{s-1}$ the exponent sum of~$a_i$ is zero in $w_{k}$ for all  ${k} \in \intseg{0}{m-1}$. Since the~$a_i$ in $w_{k}$ are obtained from~$a_{i+1}$ in $w$, we conclude that the exponent sum of~$a_{i+1}$ in $w$ is zero for all $i \in \intseg{0}{s-1}$. 

    Now, assume that there exists $k \in \intseg{0}{m-1}$ such that $|w_k| = |w|$. By replacing~$w$ with~$w_k$, repeatedly if necessary, we may assume 
    that $w$ does  
    involve the letters $a_0^{\pm 1}$. 
   Then, if $w$ has $a_0^{\pm 1}$ as a letter, then the total exponent of~$a_0$ is congruent to~0 modulo~$m$. Suppose first that the total exponent of $a_0$ is non-zero. 
 Since the total exponent of $a_0$  is congruent to $0$ modulo~$m$, we can find a subword $a_0^{\pm 1}w' a_0^{\pm 1}$ such that the exponent sum of $a_0$ in $w'$ is zero. By realising this subword in $\mathfrak{K(v)}$, we immediately see a length reduction among the sections, and it follows by induction that every section $w_k$ has exponent sum of $a_i$ equal to zero for all $i\in [0,s-1]$. So in particular, the case  that total exponent of $a_0$ in $w$ is non-zero cannot occur.     So assume that the total exponent of $a_0$ is zero. Upon considering each $w_k$, we may  assume that the total exponent of $a_0$ in~$w_k$ is  zero, else we are done by the above argument. Hence the total exponent of $a_1$ in~$w$ is zero. Recursively, we deduce 
    that the total exponent in~$w$ of any $a_i$ is zero. The result then follows. 
\end{proof}

We proceed to prove parts (iv) and (v) of \cref{Theorem}{thm:properties}.

\begin{lemma}\label{lemma_abelian}
    The quotient group $\mathfrak{K(v)}/\mathfrak{K(v)}'$ is isomorphic to the free abelian group of rank $s$.
\end{lemma}

\begin{proof}
    For every $g \in \mathfrak{K(v)}$, there exist integers $n_0,\dots,n_{s-1}$ such that $g \equiv a_0^{n_0}\cdots a_{s-1}^{n_{s-1}}$ modulo~$\mathfrak{K(v)}'$. It is enough to show that, if $a_0^{n_0}\cdots a_{s-1}^{n_{s-1}} \in \mathfrak{K(v)}'$, for some $n_0,\dots,n_{s-1} \in \mathbb{Z}$, then $n_0=\cdots=n_{s-1} = 0$. 

    Assume that $a_0^{n_0}\cdots a_{s-1}^{n_{s-1}} \in \mathfrak{K(v)}'$ for some $n_0,\dots,n_{s-1} \in \mathbb{Z}$. It is clear that every element in $\mathfrak{K(v)}'$ can be written as a word in~{$S$} 
    and the exponent sum of each~$a_i$ is zero. It follows from \cref{Lemma}{lemma_exp_sum} that the exponent sums of the $a_i$'s in all words representing the same element in $\mathfrak{K(v)}$ are the same. This, in particular, applies  to $a_0^{n_0}\cdots a_{s-1}^{n_{s-1}}$, and hence $n_0 = \cdots = n_{s-1} = 0$.     
\end{proof}

\begin{lemma}\label{lemma_k(v)_torsion}
   The group $\mathfrak{K(v)}$ is torsion-free.
\end{lemma}
\begin{proof}
  It follows immediately from \cite[Theorem 3.6]{DNT} that $\mathfrak{K(v)}$ is torsion-free as the quotient group $\mathfrak{K(v)}/\mathfrak{K(v)}'$ is torsion-free and the subgroup $\mathfrak{K(v)}'$ is contained in $\St_{\mathfrak{K(v)}}(1)$. 
\end{proof}

Next, we finish the proof of \cref{Theorem}{thm:properties}. Recall that for $g\in\mathfrak{K(v)}$, the $n$th level sections $g_0,\ldots,g_{m^n-1}$ of~$g$ are determined from expressing $g$ as the product $$
g=\psi_n^{-1}((g_0,\ldots,g_{m^n-1}))\tau_g,
$$
where $\psi_n^{-1}((g_0,\ldots,g_{m^n-1}))\in\St_{\mathfrak{K(v)}}(n)$ and $\tau_g\in\Sym(X^n)$. 

\begin{theorem}
\label{thm: contracting}
The group $\mathfrak{K(v)}$  is contracting with respect to the set of generators
$S$, with $\lambda=\frac{2}{3}$, $L=s-1$ and $C=1$.
\end{theorem}

\begin{proof}
By \cite[Lemma~3.7]{DNT}, it suffices to prove that
\begin{equation}
\label{contracting level 2}
\ell_s(g)\le \frac{2}{3}|g|+1, \quad \text{for every $g\in \mathfrak{K(v)}$.}
\end{equation}
We can further reduce the problem to showing 
that $\ell_s(h)\le 2$ for every $h\in \mathfrak{K(v)}$ of length $3$.
Indeed, writing 
$|g|=3\alpha+\beta$ 
with $\beta\in\{0,1,2\}$ and
$g=h_1\cdots h_\alpha f$ with $|h_1|=\cdots=|h_\alpha|=3$ and $|f|=\beta$, we see that  (\ref{contracting level 2}) immediately follows from the subadditivity of $\ell_s$; cf. \cite[Equation~(3.3)]{DNT}.

Let us then consider an arbitrary element $h\in \mathfrak{K(v)}$ of length $3$ and prove that $\ell_s(h)\le 2$. 
Observe that $h$ is of the form 
\begin{enumerate}
    \item [(i)] $a_i^{\epsilon_i}a_j^{\epsilon_j}a_k^{\epsilon_k}$ for $i\ne j$ and $j\ne k$; or
     \item [(ii)] $a_i^{2\epsilon_i}a_j^{\epsilon_j}$ or $a_i^{\epsilon_i}a_j^{2\epsilon_j}$ for $i\ne j$; or
      \item [(iii)] $a_i^{3\epsilon_i}$,
\end{enumerate}
where $\epsilon_i, \epsilon_j, \epsilon_k\in\{\pm 1\}$. 
For case (iii), it is clear that $\ell_{i+1}(h)\le 2$, since the only non-trivial $i$th level section is $a_0^{3\epsilon_i}$. Similarly for case (ii). 

For case (i), suppose 
for a contradiction that $\ell_n(h)=3$ for all $n\in\mathbb{N}$. 
Then note that for every~$n$, the element~$h$ has exactly one non-trivial $n$th level section, which is $a_{i-n}^{\epsilon_i}a_{j-n}^{\epsilon_j}a_{k-n}^{\epsilon_k}$. As usual, the indices of the generators are viewed modulo~$s$, so we effectively only consider the first $(s-1)$st level sections.

Consider now the non-trivial $i$th level section, which is $a_0^{\epsilon_i}a_{j-i}^{\epsilon_j}a_{k-i}^{\epsilon_k}$.
Suppose $k=i$. If $\epsilon_i=\epsilon_k$, then clearly the $(i+1)$st sections decrease in length. So suppose $\epsilon_i\ne \epsilon_k$. 
Upon considering the only non-trivial $j$th level section $a_{i-j}^{\epsilon_i}a_{0}^{\epsilon_j}a_{i-j}^{\epsilon_k}$, we see that $\ell_{j+1}(h)\le 2$. So we suppose that $k\ne i$. Consider first the case 
$\epsilon_i=1$. Recall that the non-trivial $i$th level section of $h$ is $a_0a_{j-i}^{\epsilon_j}a_{k-i}^{\epsilon_k}$. In order for  $\ell_{i+1}(h)$ to still be 3, we need 
\[
a_{j-i}=(a_{j-i-1},1,\ldots,1)\qquad\text{and}\qquad a_{k-i}=(a_{k-i-1},1,\ldots,1).
\]
If the defining word~${\mathfrak{v}}$ of the group~$\mathfrak{K(v)}$ does not correspond to the above generators, then this implies that our assumption for this case (i) does not hold, and hence the section lengths eventually decrease, giving us $\ell_s(h)\le 2$. So we proceed by assuming that ${\mathfrak{v}}$ tallies with the above generators and with the other generators singled out below.

Next, we similarly see that in order for  $\ell_{s+2i-j}(h)$ to still be 3, we need $a_{j-i}a_{2j-2i}^{\epsilon_j}a_{j+k-2i}^{\epsilon_k}$ to have exactly one non-trivial section. Thus,
\[
a_{2j-2i}=(a_{2j-2i-1},1,\ldots,1)\qquad\text{and}\qquad a_{j+k-2i}=(a_{j+k-2i-1},1,\ldots,1).
\]
In particular, proceeding in this manner, we may assume that
\[
a_{\theta(j-i)}=(a_{\theta(j-i)-1},1,\ldots,1)
\]
for all $\theta\in\mathbb{N}$. Now let $\theta=s-1$, and so $a_{i-j}=(a_{i-j-1},1,\ldots,1)$. Observe that the only non-trivial $j$th level section of $h$ is $a_{i-j}a_0^{\epsilon_j}a_{k-j}^{\epsilon_k}$. To avoid a decrease in length at the next level, we must have $\epsilon_j=-1$ and 
$a_{k-j}=(1,\ldots,1,a_{k-j-1})$. Similarly, 
 for  $\ell_{s+2j-k}(h)$ to still be 3, we need $a_{k-2j+i}a_{k-j}^{-1}a_{2(k-j)}^{\epsilon_k}$ to have exactly one non-trivial section. Thus,
\[
a_{k-2j+i}=(1,\ldots,1,a_{k-2j+i-1})\qquad\text{and}\qquad a_{2(k-j)}=(1,\ldots,1,a_{2(k-j)-1}).
\]
As deduced above, since $a_{k-i}=(a_{k-i-1},1,\ldots,1)$ we also then have that 
\[
a_{\theta(k-i)}=(a_{\theta(k-i)-1},1,\ldots,1)\qquad\text{and}\qquad a_{\theta(k-j)}=(1,\ldots,1,a_{\theta(k-j)-1})
\]
for all $\theta\in\mathbb{N}$. As before, we let $\theta=s-1$, and consider the only non-trivial $k$th level section of~$h$, which is $a_{i-k}a_{j-k}^{-1}a_{0}^{\epsilon_k}$. Since
\[
a_{(s-1)(k-i)}=a_{i-k}=(a_{i-k-1},1,\ldots,1)\qquad\text{and}\qquad a_{(s-1)(k-j)}=a_{j-k}=(1,\ldots,1,a_{j-k-1})
\]
Then we see that $\ell_{k+1}(h)\le 2$, and we are done in this case $\epsilon_i=1$.

The remaining case 
$\epsilon_i=-1$ follows similarly. Hence the proof is complete.
\end{proof}

\begin{lemma}\label{lemma_k(v)_exp}
    The semigroup generated by $a_0,\dots,a_{s-1}$ is free. In particular, the group $\mathfrak{K(v)}$ has exponential word growth.
\end{lemma}

\begin{proof}
We proceed as in the proof of \cite[Theorem 6.1]{DNT}, which is based on \cite[Lemma 4]{GZ02}. 
Let $u$ and $w$ be two different words representing the same element in the semigroup generated by $a_0,\dots,a_{s-1}$, and with $\rho=\max(|u|,|w|)$ minimal. Clearly $\rho\ge 2$. Now for any word~$z$ in the semigroup, let $|z|_{a_0}$ denote the $a_0$-length in~$z$, i.e. the number of occurrences of~$a_0$ in~$z$.
Note that we have $|u|_{a_0}\equiv |w|_{a_0} \pmod m$. In the rest of the proof we may consider the sections of $u$ and $w$ as defined in the proof of \cref{Lemma}{lemma_exp_sum}.

Suppose first that $u$ contains no $a_0$'s. We may assume that $w$ contains at least one $a_0$. Indeed, if both $u$ and $w$ have no $a_0$'s, by replacing both words with their corresponding non-trivial sections respectively, the condition of both words having no $a_0$'s cannot hold indefinitely, as then both words $u$ and $w$ would be the trivial word. Thus $w$ contains at least one $a_0$. Since $|w|_{a_0}$ must then be a non-zero multiple of~$m$,
one deduces that $w_j$, for $j\in[0,m-1]$, is a non-empty word, where $\psi(w)=(w_0,\ldots,w_{m-1})$. Indeed, every component of $\psi(w)$ contains an~$a_{s-1}$. Certainly $|w_j|<\rho$. Therefore $u$ must have only one section with a non-empty word, as otherwise it will contradict the minimality of $\rho$. Suppose this section is in the $i$th component, for some $i\in[0,m-1]$. From considering the $j$th section of $u$ and $w$, for $j\ne i$, we obtain a contradiction to the minimality of~$\rho$. So the number of occurrences of~$a_0$ in~$u$ is at least one.

If the number of occurrences of $a_0$ in $u$, respectively $w$, is at least 2, then there are at least two sections of $u$, respectively $w$, that contain $a_{s-1}$. Hence all sections of $u$, respectively $w$, have length strictly less than $|u|$, respectively $|w|$, which contradicts the minimality of $\rho$.  So suppose that both $u$ and $w$ have exactly one occurrence of $a_0$. Repeating this argument for the sections of $u$ and $w$, we may assume that every generator appears at most once in $u$ and $w$, and that the same generators appear in both $u$ and  $w$. In other words, the word $w$ is a reordering of the letters in $u$. Furthermore, as seen above, both $u$ and $w$ have only one non-empty section, which is therefore the rightmost section. Indeed, if this is not the case, then either $u$ or $w$ have more than one non-trivial section, then we contradict the minimality of $\rho$. 
Without loss of generality, we may suppose that $a_0$ is the $i$th letter of the word $u$, but the $j$th letter of the word $w$ where $j<i$. The restrictions on the sections of~$u$ implies that the first $i$ letters of $u$ have non-empty section in the rightmost component, and all other letters have non-empty section in the leftmost component. 
At least one of these first $i-1$ letters of $u$ appears to the right of $a_0$ in the word $w$. Thus, there is a non-empty section in the second last component, which yields a contradiction to the minimality of $\rho$. Hence we are done.
\end{proof}

Lastly, we end this section with a result that will be useful for the final section.
\begin{lemma}\label{lemma_direct_product_gamma_3}
    The subgroup $\psi^{-1}(\gamma_3(\mathfrak{K(v)}) \times \overset{m}\cdots \times \gamma_3(\mathfrak{K(v)}))$ is contained in $\mathfrak{K(v)}''$.
\end{lemma}

\begin{proof}
    Notice first that, for $i \in \intseg{1}{s-1}$ with $x_{i-1} \neq m-1$, we have
    \begin{align*}
        [a_i,a_0]^{a_0^{m-2-x_{i-1}}} = (1,\dots,1,a_{i-1}^{-1},a_{i-1}).
    \end{align*}
  When $x_{i-1} = m-1$, we get 
    \begin{align*}
      [a_0^{-1}, a_i] = (1,\dots,1,a_{i-1}^{-1},a_{i-1}).
    \end{align*}
  Together with 
  \cref{Lemma}{lemma_k(v)_weakly_branch}, we see that the elements of the form 
  \begin{align*}
      (1,\dots,1,{[a_i,a_j,a_k]})
  \end{align*}
  are contained in $\psi(\mathfrak{K(v)}'')$, for $i,j \in \intseg{0}{s-1}$ and $k \in \intseg{0}{s-2}$.

  We have to find the elements of the form $(1,\dots,1,{[a_i,a_j,a_{s-1}]} )$. Observe from \cref{Lemma}{lemma_k(v)_weakly_branch} that
  \[
  \mathfrak{K(v)}''\times \overset{m}{\cdots}\times  \mathfrak{K(v)}''\le \psi(\mathfrak{K(v)}'').
  \]
  Then from the Hall-Witt identity, we deduce that
  \begin{align*}
 1&=[a_j,a_i^{-1},a_{s-1}]^{a_i} [a_i,a_{s-1}^{-1},a_j]^{a_{s-1}}[a_{s-1},a_j^{-1},a_i]^{a_j}\\
 &=[a_i,a_j,a_{s-1}^{a_i}] [a_{s-1},a_i,a_j^{a_{s-1}}][a_j,a_{s-1},a_i^{a_j}]\\
 &\equiv [a_i,a_j,a_{s-1}] [a_{s-1},a_i,a_j][a_j,a_{s-1},a_i] \pmod {\mathfrak{K(v)}''}.
  \end{align*}
 Hence, if $i,j\ne s-1$, from considering the product of  $$(1,\dots,1,{[a_{s-1},a_i,a_j] } )\qquad\text{and}\qquad (1,\dots,1,{[a_j,a_{s-1},a_i] } )$$
 together with an appropriate element from $\mathfrak{K(v)}''\times \overset{m}{\cdots}\times  \mathfrak{K(v)}''$, we obtain
 \[
 (1,\dots,1,{[a_i,a_j,a_{s-1}] } 
)\in \psi{(\mathfrak{K(v)}'')}.
 \]
 If $i=s-1$ but $j\ne s-1$, we will show that 
$ {[a_{s-1},a_j} ,a_{s-1}]=1$. To this end, in light of \cref{Lemma}{lem:commuting} it suffices to consider $ {[a_{s-1-j},a_0},a_{s-1-j}]$. Recall from the proof of \cref{Lemma}{lem:commuting} that
\[
 [a_{s-1-j},a_0]=\begin{cases}
     (a_{s-1}^{-1}a_{s-j-2}a_{s-1},1,\ldots,1,a_{s-j-2}^{-1})&\text{if }x_{s-j-2}= m-1,\\
      (1,\overset{x_{s-j-2}-1}\ldots,1,a_{s-j-2}^{-1},a_{s-j-2},1,\ldots,1)&\text{if }x_{s-j-2}\ne m-1.
 \end{cases}
 \]
 Hence it is now clear that $ {[a_{s-1-j},a_0},a_{s-1-j}]=1$.
 
  If $j=s-1$ but $i\ne s-1$,
  from the Hall-Witt identity,  we have the equivalence
 \[
 1\equiv[a_{i},a_{s-1},a_{s-1}] [a_{s-1},a_i,a_{s-1}] \pmod {\mathfrak{K(v)}''}.
 \]
 Hence we are done from the previous case. 
 
 The result now follows from the level-transitivity and the fractalness of $\mathfrak{K(v)}$. 
\end{proof}

\section{Length reducing properties}
\label{sec: length reduction}

 As before, we have ${\mathfrak{v}} = x_0 \cdots x_{s-2}$ is a word in the alphabet $X=\{0,1,\dots,m-1\}$ and $\mathfrak{K(v)}$ is the group associated to ${\mathfrak{v}}$. In this section, we establish some length reducing properties for some of the groups $\mathfrak{K(v)}$.  Notice that for every $g \in \mathfrak{K(v)}$, the local action $g^{\epsilon}$ of $g$ at the root is an element of~$\langle \sigma \rangle$. Hence, for conciseness, we denote $g^{\epsilon}$ by~$\sigma_g$. If $g \in \mathfrak{K(v)}$ then~$|g|$ denotes the minimal length of all words in the alphabet~$S$ representing~$g$. A word $w \in S^*$ is called a \emph{geodesic word} if $\vert w \vert = \vert g \vert$, where $g$ is the image of the word~$w$ in~$\mathfrak{K(v)}$.

 \begin{lemma}\label{lemma length red 1}
  Let 
  $g = (g_0,\dots,g_{m-1})\sigma_g \in \mathfrak{K(v)}.$ Then $\sum \limits _{k =0}^{m-1} \vert g_k \vert \leq \vert g \vert$.
 \end{lemma}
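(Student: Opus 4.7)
The plan is to fix a geodesic word $w = s_1 s_2 \cdots s_n$ for $g$ in the alphabet $S$, so that $n = |g|$, and then exhibit words in $S^*$ whose total length is at most $n$ and which represent the first-level sections $g_0, \ldots, g_{m-1}$. The argument rests on the very restricted section structure of the generators: each element of $S$ has exactly one non-trivial first-level section, and that section is again a single element of $S$.

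More precisely, I would first record the following observation. For every $j \in \intseg{0}{s-1}$, the generator $a_j$ satisfies $(a_j)_{m-1} = a_{j-1}$ (with indices taken mod $s$) and $(a_j)_k = 1$ for $k \neq m-1$, directly from the definitions given above. Applying the product rule $(xy)_k = x_{\sigma_y(k)}\, y_k$ to $a_j a_j^{-1} = 1$ shows that $a_j^{-1}$ likewise has a unique non-trivial first-level section, which is again a single element of $S$ (located at position $m-1$ for $j \neq 0$ and at position $0$ for $j = 0$). Thus, for every $t \in S$, there is a unique index $\ell_t \in X$ with $t_{\ell_t} \neq 1$, and moreover $t_{\ell_t} \in S$.

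Iterating the product rule for sections along the word $w$ yields
\[
g_k \;=\; (s_1)_{\pi_1(k)}\, (s_2)_{\pi_2(k)} \cdots (s_n)_{\pi_n(k)},
\]
where $\pi_i = \sigma_{s_{i+1}\cdots s_n}$ is the root-level permutation induced by the tail of the word (with $\pi_n$ equal to the identity). Since each factor $(s_i)_{\pi_i(k)}$ is either trivial or a single letter of $S$, the right-hand side, read as a word in $S^*$, has length at most $\#\{i : \pi_i(k) = \ell_{s_i}\}$; hence this bounds $|g_k|$. Summing over $k$ and using that each $\pi_i$ is a permutation of $X$, so that the equation $\pi_i(k) = \ell_{s_i}$ admits exactly one solution $k \in X$, gives
\[
\sum_{k=0}^{m-1} |g_k| \;\leq\; \sum_{k=0}^{m-1} \#\{i : \pi_i(k) = \ell_{s_i}\} \;=\; \sum_{i=1}^{n} 1 \;=\; n \;=\; |g|,
\]
which is the desired inequality.

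I do not expect a serious obstacle here: the content is essentially a bookkeeping argument exploiting the fact that the generating set $S$ is closed under taking the (unique) non-trivial first-level section. The one point that requires care is the treatment of the inverse generators, particularly $a_0^{-1}$, whose non-trivial section sits at position $0$ rather than $m-1$; but this asymmetry is absorbed into the single index $\ell_t$ and never enters the counting step above.
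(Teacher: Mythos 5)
Your argument is correct and is essentially the paper's proof with the induction unrolled: the paper writes a geodesic word as $x w'$, peels off one letter at a time, and uses that each letter of $S$ contributes total section length at most one across the $m$ coordinates, which is exactly the counting you perform over all letters simultaneously. One small point: with the paper's conventions the section rule reads $(xy)_k = x_k\, y_{\sigma_x(k)}$ (see the displayed computation in the paper's proof), so the permutation indexing the $i$th letter is induced by the prefix $s_1\cdots s_{i-1}$ rather than the tail $s_{i+1}\cdots s_n$; this does not affect your bound, since the only property you use is that each $\pi_i$ is a permutation of $X$, so each letter lands in exactly one coordinate.
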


 \begin{proof}
  The proof proceeds by induction on the length of $g$. Clearly, the result is true if $\vert g \vert =0$ and $\vert g \vert =1.$ Assume that $\vert g \vert >1.$ Let $w \in S^*$ be a geodesic word representing $g$. The word~$w$ can be written as $w=bw'$
  for some $b \in S$ and $w'\in S^*$ such that $w'$ is reduced. Then $\vert w' \vert < \vert w \vert$ and $w'$ does not represent $g$ in $\mathfrak{K(v)}$.  Denote by $g'$ the corresponding element in $\mathfrak{K(v)}$. Then $\vert g' \vert \leq \vert w' \vert < \vert w \vert = \vert g \vert$. We obtain 
   \begin{align*}
    (g_0,\dots,g_{m-1})\sigma_{g}= g 
    & = b  g' = (b_0,\dots,b_{m-1})\sigma_{b}(g'_0,\dots,g'_{m-1})\sigma_{g'} \\
    & = ({b}_0g'_{0^{\sigma_{b}}},\dots,{b}_{m-1}g'_{(m-1)^{\sigma_{b}}})\sigma_{b}\sigma_{g'},
  \end{align*}
 which implies $g_k = {b}_kg'_{k^{\sigma_{b}}}$ for all $k \in \intseg{0}{m-1}$. It follows by induction that
  \[
   \sum \limits _{k =0}^{m-1} \vert g_k \vert = \sum \limits _{k =0}^{m-1} \vert {b}_kg'_{k^{\sigma_{b}}} \vert \leq \sum \limits _{k =0}^{m-1} \vert  {b}_k \vert + \sum \limits _{k =0}^{m-1} \vert g'_{k^{\sigma_{b}}} \vert \leq \vert {b} \vert + \vert g' \vert \leq  \vert {b} \vert + \vert w' \vert = \vert w \vert = \vert g \vert. \qedhere
  \]
 \end{proof}

 \begin{lemma}\label{lemma length red 2} 
  Let ${\mathfrak{v}} = x_0 \cdots x_{s-2}$ be a word in the alphabet $X$, and let $g = (g_0,\dots,g_{m-1})\sigma_g \in \mathfrak{K(v)}$ with $\sigma_g = \sigma^{i}$ for some $i \in \intseg{1}{m-1}$ such that $\gcd{(i,m)}=1$. Let $\alpha_0,\dots,\alpha_{m-1} \in \mathfrak{K(v)}$ be such that $g^m = (\alpha_0,\dots,\alpha_{m-1})$. Then $\vert \alpha_k \vert \leq \sum \limits _{\ell =0}^{m-1} \vert g_{\ell} \vert\leq \vert g \vert$ for all $k \in \intseg{0}{m-1}$.
 \end{lemma}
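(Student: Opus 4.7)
The plan is to compute $g^m$ explicitly, extract the sections~$\alpha_k$, and observe that each $\alpha_k$ is a product of the first-level sections $g_0,\dots,g_{m-1}$ taken in some order; the bound then follows from \cref{Lemma}{lemma length red 1}.

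First I would establish by induction on $j \geq 1$, using the multiplication formula
\[
(a_0,\dots,a_{m-1})\tau \cdot (b_0,\dots,b_{m-1})\rho = (a_0 b_{0^\tau},\dots,a_{m-1} b_{(m-1)^\tau})\,\tau\rho
\]
recalled in the proof of \cref{Lemma}{lemma length red 1}, that
\[
g^j = \bigl(g_0 g_{0^{\sigma^i}} \cdots g_{0^{\sigma^{(j-1)i}}},\, \ldots,\, g_{m-1} g_{(m-1)^{\sigma^i}} \cdots g_{(m-1)^{\sigma^{(j-1)i}}}\bigr)\sigma^{ji}.
\]
Setting $j = m$ and noting that $\sigma^{mi} = \mathrm{id}$ confirms that $g^m$ lies in $\St_G(1)$ and that
\[
\alpha_k \;=\; g_k\, g_{k^{\sigma^i}}\, g_{k^{\sigma^{2i}}} \cdots g_{k^{\sigma^{(m-1)i}}}
\]
for each $k \in \intseg{0}{m-1}$.

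The key point is the hypothesis $\gcd(i,m) = 1$, which ensures that $\sigma^i$ is itself an $m$-cycle. Consequently, the orbit of $k$ under $\langle \sigma^i \rangle$ is the whole set $\{0,1,\dots,m-1\}$, so the indices $k, k^{\sigma^i}, k^{\sigma^{2i}}, \dots, k^{\sigma^{(m-1)i}}$ are precisely a permutation of $0,1,\dots,m-1$. Hence $\alpha_k$ is a product, in a particular order depending on~$k$, of the elements $g_0, g_1, \dots, g_{m-1}$, each appearing exactly once. The subadditivity of word length then gives $|\alpha_k| \leq \sum_{\ell=0}^{m-1} |g_\ell|$, and the second inequality $\sum_{\ell=0}^{m-1} |g_\ell| \le |g|$ is exactly \cref{Lemma}{lemma length red 1}.

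There is no substantial obstacle here; the argument is essentially a direct computation, with the coprimality assumption being used solely to guarantee that the cyclic shift $\sigma^i$ acts transitively on $\{0,1,\dots,m-1\}$, so that every section $g_\ell$ appears in the expression for each $\alpha_k$.
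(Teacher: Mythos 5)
Your proposal is correct and follows essentially the same route as the paper: compute $g^m$ to identify $\alpha_k$ as the product $g_k g_{k^{\sigma^i}}\cdots g_{k^{\sigma^{(m-1)i}}}$, use $\gcd(i,m)=1$ so that these indices run over all of $\{0,\dots,m-1\}$, and conclude by subadditivity of word length together with \cref{Lemma}{lemma length red 1}. Your explicit induction for the power formula and the remark on transitivity of $\sigma^i$ just spell out steps the paper leaves implicit.
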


\begin{proof}
 Observe that
 \begin{align*}
    g^m =\big(g_{0}g_{0^{\sigma^i}}g_{0^{\sigma^{2i}}}\cdots g_{0^{\sigma^{(m-1)i}}}\,,\,\dots\,,\,g_{m-1}g_{(m-1)^{\sigma^i}}g_{(m-1)^{\sigma^{2i}}}\cdots g_{(m-1)^{\sigma^{(m-1)i}}}\big).
 \end{align*}
 By setting $\alpha_k = g_{k}g_{k^{\sigma^i}}\cdots g_{k^{\sigma^{(m-1)i}}}$, for each $k \in \intseg{0}{m-1}$, we obtain 
  \[
    \vert \alpha_k \vert = \vert g_{k}g_{k^{\sigma^i}}\cdots g_{k^{\sigma^{(m-1)i}}} \vert \leq \sum \limits _{\ell =0}^{m-1} \vert g_{\ell} \vert \leq \vert g \vert,
  \]
 where the last inequality follows from \cref{Lemma}{lemma length red 1}.
\end{proof}

\begin{lemma}
\label{lemma length red 3}
 Let ${\mathfrak{v}} = x_0 \cdots x_{s-2}$ be a word in the alphabet $X\backslash\{0\}$. Let $g = (g_0,\dots,g_{m-1})\sigma_g \in {\mathfrak{K(v)}}$ and let ${b}_1 \cdots {b}_\ell \in S^*$ be a geodesic word representing~$g$. If there exist $1 \leq r < r' \leq \ell$ such that ${b}_r = a_{0}$, ${b}_{r'}=a_0^{-1}$, then $\sum \limits _{k =0}^{m-1} \vert g_k \vert < \vert g \vert.$ 
\end{lemma}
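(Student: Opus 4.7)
The plan is to obtain the strict inequality by computing the first-level sections of $g$ explicitly from a geodesic word $w=x_1\cdots x_\ell$ representing $g$ and exhibiting a forced cancellation they must contain. The starting observation is that every generator has exactly one non-trivial first-level entry: $a_j=(1,\ldots,1,a_{j-1})$ for $j\in\intseg{1}{s-1}$, $a_0=(1,\ldots,1,a_{s-1})\sigma$, and a direct inversion gives $a_0^{-1}=(a_{s-1}^{-1},1,\ldots,1)\sigma^{-1}$. Writing each $x_i$ as $Z_i\sigma^{e_i}$, where $Z_i$ has its unique non-trivial entry $y_i\in S$ at a position $p_i\in\{0,m-1\}$, and setting $E_k=\sum_{i\le k}e_i$, I would push all the $\sigma$-powers rightward through $x_1\cdots x_\ell$ using the rule $\tau(z_0,\ldots,z_{m-1})=(z_{\tau(0)},\ldots,z_{\tau(m-1)})\tau$ from the proof of \cref{Lemma}{lemma commutator product} to obtain
\[
g=(g_0,\ldots,g_{m-1})\sigma^{E_\ell},\qquad g_j=\prod_{i:\,q_i=j}y_i,\qquad q_i\equiv p_i-E_{i-1}\pmod m,
\]
with the product taken in increasing order of $i$. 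This formula already recovers the bound $\sum_j|g_j|\le\ell$ of \cref{Lemma}{lemma length red 1}.

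Next, I would select from the hypothesis a pair $r<r'$ with $x_r=a_0$, $x_{r'}=a_0^{-1}$ and \emph{no} $a_0^{\pm 1}$ strictly between them. Such a pair exists because, reading off the subsequence of $a_0^{\pm 1}$-letters of $w$, the given witness forces some consecutive $+,-$ transition in the required order (take the largest $a_0$-occurrence that precedes a given $a_0^{-1}$-occurrence). For this choice, $E_{i-1}=E_{r-1}+1$ for every $i$ with $r<i\le r'$, hence
$q_r\equiv m-1-E_{r-1}\equiv -1-E_{r-1}\equiv q_{r'}\pmod m$,
so $y_r=a_{s-1}$ and $y_{r'}=a_{s-1}^{-1}$ both contribute to the same section~$g_{q_r}$. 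Meanwhile every intervening letter $x_i$ ($r<i<r'$) is some $a_k^{\pm 1}$ with $k\ge 1$, so $p_i=m-1$ and $q_i\equiv m-2-E_{r-1}\not\equiv q_r\pmod m$, using $m\ge 2$.

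Therefore, in the ordered product defining $g_{q_r}$, the letters $y_r=a_{s-1}$ and $y_{r'}=a_{s-1}^{-1}$ appear consecutively, cancel, and give $|g_{q_r}|\le|\{i:q_i=q_r\}|-2$; combining this with $|g_k|\le|\{i:q_i=k\}|$ for $k\ne q_r$ and summing yields $\sum_k|g_k|\le\ell-2<|g|$, as required. The main obstacle I foresee is the index bookkeeping in establishing the formula $q_i\equiv p_i-E_{i-1}\pmod m$: the asymmetric positions of the non-trivial entries of $a_0$ and $a_0^{-1}$ (namely $m-1$ versus $0$) must be combined correctly with the cumulative shift $E_{i-1}$, and it is precisely this asymmetry that sends $y_r$ and $y_{r'}$ to the same coordinate while keeping every intervening section at a different coordinate, producing the required cancellation.
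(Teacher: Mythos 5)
Your proposal is correct and is essentially the paper's own argument: both proofs isolate an innermost pair $a_0,\,a_0^{-1}$ (no $a_0^{\pm 1}$ in between) and exploit the fact that their first-level contributions $a_{s-1}$ and $a_{s-1}^{-1}$ land adjacently in the same section and cancel, yielding the saving of $2$. The paper packages this as the identity $a_0 h a_0^{-1}=(1,\dots,1,\varphi_{m-1}(h),1)$ for $h$ represented by the intervening subword together with \cref{Lemma}{lemma length red 1}, whereas you carry out the same coordinate bookkeeping explicitly via the shifts $q_i\equiv p_i-E_{i-1}\pmod m$; the only nitpick is that your parenthetical recipe for locating the innermost pair needs the extra step of first taking the earliest $a_0^{-1}$ after the chosen $a_0$ (or vice versa), which is immediate.
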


\begin{proof}
 By assumption, the word ${b}_1\cdots {b}_\ell$ contains a subword of the form $a_{0}wa_{0}^{-1}$, where $w$ is a non-trivial reduced word in the alphabet~$S$. We assume, without loss of generality, that $w$ is a reduced word in the alphabet~$S\setminus \{a_0^{\,\pm 1}\}$. Let $w$ represent an element~$h$ in~$\mathfrak{K(v)}$. Since ${b}_1 \cdots {b}_\ell \in S^*$ is a geodesic word, the word~$w$ is also geodesic and so $|h| = |w|$. Notice that $\vert a_{0}w a_{0}^{-1} \vert = \vert w \vert + 2$. Realising the word $a_{0}wa_{0}^{-1}$ in~$\mathfrak{K(v)}$ gives
 \begin{align*}
  a_{0}wa_{0}^{-1}=(\varphi_{1}(h),\dots,\varphi_{m-2}(h),\varphi_{m-1}(h),1);  
 \end{align*} 
 indeed, recall that by assumption we have 
 \[
   h = (1, \varphi_{1}(h),\dots,\varphi_{m-2}(h),\varphi_{m-1}(h)).
 \]
 By \cref{Lemma}{lemma length red 1}, we get $\sum_{i=1}^{m-1}|\varphi_{i}(h)| \le |h| = |w|$.
  Therefore we conclude that 
  \[
  \sum \limits _{k =0}^{m-1} \vert g_k \vert \leq \vert g \vert -2 < \vert g \vert. \qedhere
  \] 
\end{proof}

\begin{lemma}
\label{lemma length red 3 for t equals 0}
 Let ${\mathfrak{v}} = 0 \,\overset{s-1}\dots \, 0$ and let $g = (g_0,\dots,g_{m-1})\sigma^{\epsilon} \in {\mathfrak{K(v)}}$ with $\epsilon\in\{\pm 1\}$.
 Suppose ${b}_1 \cdots {b}_\ell \in S^*$ is a geodesic word representing~$g$. If there exist $1 \leq r < r' \leq \ell$ such that either ${b}_r = a_{0}$, ${b}_{r'}=a_0^{-1}$ or ${b}_r = a_{0}^{-1}$, ${b}_{r'}=a_0$, then $\vert g_0g_1\cdots g_{m-1} \vert < \vert g \vert$ if $\epsilon=1$, and $\vert g_0g_{m-1}\cdots g_{1} \vert < \vert g \vert$ if $\epsilon=-1$. 
\end{lemma}

\begin{proof}
Note that we may assume that there are no $1\le i<j\le \ell$ such that $b_i=a_0^{-1}$ and $b_j=a_0$, as then  the desired length reduction is clear. It then follows that $$
w:={b}_1 \cdots {b}_\ell =*a_0(*a_0)\overset{\lambda}\cdots(*a_0)(*a_0^{-1})\overset{\lambda}\cdots (*a_0^{-1})*\qquad\text{if }\epsilon=1
$$ 
and
$$
w:={b}_1 \cdots {b}_\ell =(*a_0)\overset{\lambda}\cdots(*a_0)(*a_0^{-1})\overset{\lambda}\cdots (*a_0^{-1})*a_0^{-1}*\qquad\text{if }\epsilon=-1,
$$ 
for some $\lambda\in\N$ and  the $*\in\langle a_1,\ldots,a_{s-1}\rangle$ are arbitrary. 

Let \[
w'=(*a_0)\overset{\lambda}\cdots(*a_0)(*a_0^{-1})\overset{\lambda}\cdots (*a_0^{-1})*.
\]
Write $w'=(w_0',\ldots,w_{m-1}')$ and 
$\lambda=\mu m+\chi$ for some $\mu\in \mathbb{N}_0$ and $\chi\in [0,m-1]$. If $\mu\ne 0$, the first-level sections of $w'$ have the following form  
    \begin{align*}
   w_0' &=\varphi_0(*)(a_{s-1}\varphi_0(*))\overset{\mu-1}{\cdots}(a_{s-1}\varphi_0(*))a_{s-1}\varphi_0(*)a_{s-1}^{-1} (\varphi_0(*)a_{s-1}^{-1})\overset{\mu-1}{\cdots}(\varphi_0(*)a_{s-1}^{-1}) \varphi_0(*)\\
   w_1'&=(a_{s-1}\varphi_0(*))\overset{\mu-1}{\cdots}(a_{s-1}\varphi_0(*))a_{s-1}\varphi_0(*)a_{s-1}^{-1}  (\varphi_0(*)a_{s-1}^{-1})\overset{\mu-1}{\cdots}(\varphi_0(*)a_{s-1}^{-1})\\
   &\,\,\,\vdots\\
   w_{m-1-\chi}'&=(a_{s-1}\varphi_0(*))\overset{\mu-1}{\cdots}(a_{s-1}\varphi_0(*))a_{s-1}\varphi_0(*)a_{s-1}^{-1}  (\varphi_0(*)a_{s-1}^{-1})\overset{\mu-1}{\cdots}(\varphi_0(*)a_{s-1}^{-1})\\
   w_{m-\chi}'&=(a_{s-1}\varphi_0(*))\overset{\mu}{\cdots}(a_{s-1}\varphi_0(*))a_{s-1}\varphi_0(*)a_{s-1}^{-1}  (\varphi_0(*)a_{s-1}^{-1})\overset{\mu}{\cdots}(\varphi_0(*)a_{s-1}^{-1})\\
    &\,\,\,\vdots\\
    w_{m-1}'&=(a_{s-1}\varphi_0(*))\overset{\mu}{\cdots}(a_{s-1}\varphi_0(*))a_{s-1}\varphi_0(*)a_{s-1}^{-1}  (\varphi_0(*)a_{s-1}^{-1})\overset{\mu}{\cdots}(\varphi_0(*)a_{s-1}^{-1}).
    \end{align*}
 When $\mu = 0$ and hence $\chi\ne 0$, 
 we get
    \begin{align*}
        w_0' &= \varphi_0(*)\\
        w_1' &= 1\\
        &\,\,\,\vdots\\
        w_{m-1-\chi}' &= 1 \\
        w_{m-\chi}' &= a_{s-1}\varphi_0(*)a_{s-1}^{-1}\\
        &\,\,\,\vdots\\
        w_{m-1}' &= a_{s-1}\varphi_0(*)a_{s-1}^{-1}.
    \end{align*}
    Hence a straightforward computation shows that $|g_0g_1\cdots g_{m-1}|<|g|$ if $\epsilon=1$, and that $\vert g_0g_{m-1}\cdots g_{1} \vert < \vert g \vert$ if $\epsilon=-1$, as required.
\end{proof}

 
\section{Maximal subgroups} 
\label{sec: maximal subgroups}

 It follows from \cref{Proposition}{prop vir nil} below together with \cite[Proposition 2.21]{Fra20} that the group $\mathfrak{K(v)}$ admits maximal subgroups of infinite index if and only if it admits a proper subgroup $H < \mathfrak{K(v)}$ such that $HN = \mathfrak{K(v)}$ for every non-trivial normal subgroup $N \trianglelefteq \mathfrak{K(v)}$. A subgroup $H \leq \mathfrak{K(v)}$ satisfying the above condition is called a \emph{prodense subgroup}. 
  As seen below, we prove that $\mathfrak{K(v)}$, for a constant word~$\mathfrak{v}$, does not admit any proper prodense subgroup, which proves \cref{Theorem}{thm main}. 
  
 \begin{prop}\label{prop vir nil}
  The group $\mathfrak{K(v)}$ is just non-(virtually nilpotent). Hence, maximal subgroups of proper quotients of $\mathfrak{K(v)}$ are of finite index.
 \end{prop}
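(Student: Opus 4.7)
The proposition comprises two claims: (a) $G$ is just non-(virtually nilpotent), i.e.\ $G$ itself is not virtually nilpotent while every proper quotient is; and (b) every maximal subgroup of a proper quotient of $G$ has finite index. Claim (b) is a standard consequence of (a): a finitely generated virtually nilpotent group is polycyclic-by-finite, and every maximal subgroup of a polycyclic-by-finite group has finite index.

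The non-virtual-nilpotence of $G$ is immediate from \cite[Theorem~1.6]{PR}: the group $G$ has exponential word growth, whereas by the Bass--Guivarch formula every finitely generated virtually nilpotent group has polynomial word growth. Hence $G$ cannot be virtually nilpotent.

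The substantive part is to show that every proper quotient of $G$ is virtually nilpotent. Let $1\ne N\trianglelefteq G$. Since $G$ is weakly regular branch over $G'$, the preimage $K_n := \psi_n^{-1}(G'\times\overset{m^n}\dots\times G')$ is a well-defined normal subgroup of $G$ lying inside $\St_G(n)$. I will argue that $N\supseteq K_n$ for some $n\in\N$. Granting this, $G/N$ is a quotient of $G/K_n$; its finite-index subgroup $\St_G(n)/K_n$ embeds into $(G/G')^{m^n}$, which is finitely generated abelian. Thus $G/K_n$, and therefore $G/N$, is virtually abelian and in particular virtually nilpotent.

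Proving the key inclusion $K_n\subseteq N$ for some $n$ is the core of the argument. Starting from an arbitrary $1\ne g\in N$, I would perform a descent: conjugate $g$ by generators of $G$ (using normality of $N$), take commutators $[g,h]$ with $h$ drawn from the branch-induced copies of $G'$ supported at individual vertices, and extract sections at each level via $\psi_1$. The length-reducing properties of \cref{Section}{sec: length reduction}---in particular \cref{Lemma}{lemma length red 1}, \cref{Lemma}{lemma length red 2}, and \cref{Lemma}{lemma length red 3}---keep the sections bounded in length, forcing the descent to terminate after finitely many levels, while \cref{Lemma}{lemma commutator product} helps in tracking the first-level decomposition of commutators. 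The main obstacle I anticipate is showing that this procedure populates all of $G'\times\overset{m^n}\dots\times G'$ rather than stabilising in a proper subgroup of $K_n$; this requires delicate bookkeeping tailored to the specific generator structure of $\bp_s(\mathcal{O}_m)$.
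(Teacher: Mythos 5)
Your first two steps are fine and match the paper: non-virtual nilpotence follows from exponential word growth together with Bass--Guivarc'h, and the passage from ``just non-(virtually nilpotent)'' to ``maximal subgroups of proper quotients have finite index'' is standard (the paper quotes \cite[Corollary 5.1.3]{Fra19}). The genuine gap is in the substantive step. Your argument hinges on the inclusion $K_n=\psi_n^{-1}(G'\times\overset{m^n}{\cdots}\times G')\leq N$ for \emph{every} non-trivial $N\trianglelefteq G$, and this is exactly the point you leave open (your ``main obstacle''). It is not a known fact, and it is strictly stronger than what the weakly branch machinery provides. The standard commutator argument (take $1\neq g\in N$ moving a vertex $u$ at level $n$, and commute $g$ with two elements of $\rst_G(u)$) only yields $\prod_{|w|=n}\rst_G(w)'\leq N$; since weak regular branching over $G'$ only guarantees that each $\rst_G(w)$ contains a geometric copy of $G'$, what lands in each component is $G''$, i.e.\ one gets $\psi_n^{-1}(G''\times\overset{m^n}{\cdots}\times G'')\leq N$, not $K_n\leq N$. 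This is why the paper, via \cite[Theorem 4.10]{Fra20}, reduces to showing that $G/G''$ (not $G/G'$) is virtually nilpotent, and that is where the real content lies: the specific input \cite[Lemma 7.3]{PR}, namely $\psi_1^{-1}(\gamma_3(G)\times\cdots\times\gamma_3(G))\leq G''$, gives an embedding of $\St_G(1)/\psi_1^{-1}(\gamma_3(G)\times\cdots\times\gamma_3(G))$ into $(G/\gamma_3(G))^m$, whence $\St_G(1)/G''$ is nilpotent and $G/G''$ is virtually nilpotent. Note also that your claimed inclusion would force every proper quotient to be virtually \emph{abelian}, a stronger conclusion you have not substantiated.

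A secondary point: the descent you sketch (conjugation, commutators with branch copies of $G'$, sections, controlled by the length-reduction results of \cref{Section}{sec: length reduction}) misuses those lemmas. They bound word lengths of sections and of $m$-th powers and are designed for the prodense-subgroup analysis in \cref{Theorem}{prop prodense}; they give no mechanism for forcing a normal subgroup to contain a full direct product $G'\times\cdots\times G'$ at some level. To repair your route, replace the claimed inclusion by the attainable one with $G''$ in the components and then prove that $G/G''$ is virtually nilpotent, e.g.\ exactly as the paper does via \cite[Lemma 7.3]{PR}.
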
 

 \begin{proof}
 As $\mathfrak{K(v)}$ has exponential word growth from \cref{Lemma}{lemma_k(v)_exp}, it follows from  
 Bass~\cite{Bass} and Guivarc'h~\cite{Guivarch}
 that  $\mathfrak{K(v)}$ is not virtually nilpotent. To see that every proper quotient of $\mathfrak{K(v)}$ is virtually nilpotent, by 
 \cite[Theorem 4.10]{Fra20}, it suffices to prove that $\mathfrak{K(v)}/\mathfrak{K(v)}''$ is virtually nilpotent. Set $N = \psi^{-1}(\gamma_3(\mathfrak{K(v)}) \times \cdots \times \gamma_3(\mathfrak{K(v)}))$. 
 From 
\cref{Lemma}{lemma_direct_product_gamma_3}, we have $N \leq \mathfrak{K(v)}'' < \St_{\mathfrak{K(v)}}(1) < {\mathfrak{K(v)}}$.
 Therefore ${\psi}$ induces a homomorphism 
   \[
    {\widetilde{\psi}}: \St_{\mathfrak{K(v)}}(1)/N \longrightarrow \mathfrak{K(v)}/\gamma_3(\mathfrak{K(v)}) \times \overset{m}\cdots \times \mathfrak{K(v)}/\gamma_3(\mathfrak{K(v)}).
  \]
 Since $\widetilde{\psi}$ is injective and $\widetilde{\psi}(\St_{\mathfrak{K(v)}}(1)/N)$ is nilpotent (being a subgroup of a nilpotent group), we obtain that $\St_{\mathfrak{K(v)}}(1)/N$ is nilpotent. This implies that $\St_{\mathfrak{K(v)}}(1)/{\mathfrak{K(v)}}''$ is nilpotent as it is a quotient of $\St_{\mathfrak{K(v)}}(1)/N$. As the subgroup $\St_{\mathfrak{K(v)}}(1)$ has finite index in $\mathfrak{K(v)}$, the group $\St_{\mathfrak{K(v)}}(1)/\mathfrak{K(v)}''$ has finite index in $\mathfrak{K(v)}/\mathfrak{K(v)}''$ and hence $\mathfrak{K(v)}/\mathfrak{K(v)}''$ is virtually nilpotent. The last part of the result follows from \cite[Corollary 5.1.3]{Fra19}.
\end{proof}
 
 Hereafter, for $g,h \in \mathfrak{K(v)}$, the equivalence $g \equiv h \mod \mathfrak{K(v)}'$ will simply be denoted by $g \equiv h$.  Notice that for every $z \in \mathfrak{K(v)}'$, we have $\sigma_z =1$ and $\mathfrak{K(v)}' \leq \St_{\mathfrak{K(v)}}(1)$.  Recall also from  \cref{Subsection}{subsec:2.1} the map $\varphi_u$ for $u\in X^*$. For  convenience, we introduce the following notation. For $i \in [0,s-2]$ and $j \in [0,m-1]$, we define
\begin{align*}
    \delta_{j}(a_i) = \begin{cases}
                     a_i &\text{ if } x_i = j,\\
                     1   &\text{otherwise},
\end{cases}
\end{align*}
 and extend $\delta_j$ to a homomorphism on $S^*$.
 \begin{lemma}
 \label{lemma g equiv product}
 Let ${\mathfrak{v}} = x_0 \cdots x_{s-2}$ be a word in the alphabet $X$, and let $g \in \mathfrak{K(v)}$ be such that $g \equiv a_{s-1}^{\epsilon_{s-1}} \cdots a_{0}^{\epsilon_{0}}$,
 where $\epsilon_{i}\in\{\pm 1\}$. For $j \in \mathbb{N}_0$, write $j = \ell s +r$, where $\ell \in \mathbb{N}_0$ and $r \in [0, s-1]$. If $\psi_j(g^{m^j})=(g_0,\dots,g_{m^j-1})$ then $g_k \equiv a_{s-1-r}^{\epsilon_{s-1}} \cdots a_{0-r}^{\epsilon_{0}}
  $ for all $k \in \intseg{0}{m^j-1}$.
 \end{lemma}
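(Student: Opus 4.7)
The plan is to proceed by induction on $j$, proving simultaneously that $g^{m^j} \in \St_G(j)$ (so that the statement is well-posed) and that its level-$j$ sections all have the asserted form modulo $G'$. The base case $j = 0$ is immediate, as $\psi_0$ is the identity map on $G$ and the hypothesis provides precisely the required form with $r = 0$.

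The key tool is to package the \emph{product of level-$1$ sections} map into a shift homomorphism on the abelianisation. For $u = (u_0,\dots,u_{m-1})\sigma_u \in G$, set $\mu(u) = u_0 u_1 \cdots u_{m-1}$. A direct calculation using the multiplication law in $\Aut T$ shows that $\mu(uv)$ differs from $\mu(u)\mu(v)$ only by a permutation of factors, so $\mu(uv) \equiv \mu(u)\mu(v) \mod G'$; meanwhile \cref{Lemma}{lemma commutator product} gives $\mu(z) \in G'$ for every $z \in G'$. Hence $\mu$ descends to a group homomorphism $\bar{\mu}\colon G/G' \to G/G'$. Evaluating $\mu$ on generators yields $\bar{\mu}(a_0 G') = a_{s-1} G'$ and $\bar{\mu}(a_i G') = a_{i-1} G'$ for $i \in \intseg{1}{s-1}$; thus $\bar{\mu}$ implements the cyclic shift $a_i \mapsto a_{i-1}$ of generators (indices modulo $s$).

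For the inductive step, suppose the claim holds for $j = \ell s + r$, so $\psi_j(g^{m^j}) = (y_0,\dots,y_{m^j-1})$ with each $y_k \equiv a_{s-1-r}^{\epsilon_{s-1}} \cdots a_{-r}^{\epsilon_0} \mod G'$. Since only $a_0^{\epsilon_r}$ contributes to the root action of the right-hand side and $G' \leq \St_G(1)$, one has $\sigma_{y_k} = \sigma^{\epsilon_r}$ with $\gcd(\epsilon_r, m) = 1$. The computation in the proof of \cref{Lemma}{lemma length red 2} then shows that each level-$1$ section of $y_k^m$ is a permutation of the level-$1$ sections of $y_k$; consequently every such section equals $\mu(y_k)$ modulo $G'$, which by the shift property above is congruent to $a_{s-2-r}^{\epsilon_{s-1}} \cdots a_{-r-1}^{\epsilon_0}$ modulo $G'$. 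Since $g^{m^{j+1}} = (g^{m^j})^m$ and $\psi_j$ is a homomorphism, $\psi_j(g^{m^{j+1}}) = (y_0^m,\dots,y_{m^j-1}^m)$; as each $y_k^m$ lies in $\St_G(1)$, this yields $g^{m^{j+1}} \in \St_G(j+1)$ together with level-$(j+1)$ sections of the stated form for the index $(r+1) \bmod s$.

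The main obstacle is purely combinatorial: one must track carefully how the cyclic shift on generator indices interacts with the decomposition $j = \ell s + r$ (in particular handling uniformly the sub-case $r+1 = s$, where the next iteration advances $\ell$ in place of $r$), while checking throughout the induction that $g^{m^j}$ lies in $\St_G(j)$ so that $\psi_j$ can legitimately be applied.
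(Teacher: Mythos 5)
Your proof is correct and follows essentially the same route as the paper: both rest on the fact that the level-one sections of an $m$th power are products of all level-one sections of the element, which modulo $G'$ (using \cref{Lemma}{lemma commutator product}) shifts the generator indices by one, and then iterate. The only difference is cosmetic: you package this as a well-defined shift homomorphism $\bar\mu$ on $G/G'$, whereas the paper carries out the same computation explicitly with a representative $g=a_{s-1}^{\epsilon_{s-1}}\cdots a_0^{\epsilon_0}z$, $z\in G'$.
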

 
 \begin{proof}
 Since $g \equiv a_{s-1}^{\epsilon_{s-1}} \cdots a_{0}^{\epsilon_{0}}$, there exists an element $(z_0,\dots,z_{m-1})= z \in \mathfrak{K(v)}'$ such that $g = a_{s-1}^{\epsilon_{s-1}} \cdots a_{0}^{\epsilon_{0}} z$. Suppose that $\epsilon_0 = 1$. We have   that the section of $g$ at the vertex~$j$ is
 \begin{align*}
    (a_{s-1}^{\epsilon_{s-1}} \cdots a_{0}^{\epsilon_{0}} z)_j= \begin{cases}
        \delta_j(a_{s-2}^{\epsilon_{s-1}}\cdots a_0^{\epsilon_1} )z_{j+1} & \text{ if } j \in [0,m-2],\\
        \delta_{m-1}(a_{s-2}^{\epsilon_{s-1}}\cdots a_0^{\epsilon_1} ) a_{s-1}z_0 & \text{ if } j = m-1.
    \end{cases}
 \end{align*}
 Therefore, for $j \in [0,m-1]$, the element $\varphi_j(g^m)$ is equal to the product 
  \begin{align*}
      &\big(\delta_j(a_{s-2}^{\epsilon_{s-1}} \cdots a_0^{\epsilon_1} )z_{j+1}\big)\cdot\,\cdots\,\cdot\big( \delta_{m-2}( a_{s-2}^{\epsilon_{s-1}} \cdots a_0^{\epsilon_1})z_{m-1} \big)\cdot\\
      &\qquad\qquad\big(\delta_{m-1}(a_{s-2}^{\epsilon_{s-1}}\cdots a_0^{\epsilon_1}) a_{s-1}z_0\big)\cdot\\
        &\big(\delta_0(a_{s-2}^{\epsilon_{s-1}}\cdots a_0^{\epsilon_1})z_{1}\big)\cdot\,\cdots \,\cdot\big( \delta_{j-1}(a_{s-2}^{\epsilon_{s-1}}\cdots a_0^{\epsilon_1})z_{j}\big).
 \end{align*}
 Since $\delta_k(a_i^{\epsilon_{i+1}})$ is non-trivial for only one $k \in [0,m-1]$, we get that $\varphi_j(g^m) \equiv a_{s-2}^{\epsilon_{s-1}} \cdots a_{0}^{\epsilon_{1}}a_{s-1}^{\epsilon_{0}}$ for all $j \in \intseg{0}{m-1}$, since $z_0 \cdots z_{m-1} \in \mathfrak{K(v)}'$ by \cref{Lemma}{lemma commutator product}. The case $\epsilon_0 = -1$ follows in a similar way. The result then follows upon repeating the above process.
 \end{proof}

In the following, we denote by $\Sym(s)$ the symmetric group on  $[0,s-1]$.
 
 \begin{lemma}\label{lem:right-projection} 
 Let ${\mathfrak{v}}=t\,\overset{s-1}\dots \,t$ be a constant word, where $t\in[0,m-1]$. Let $g=a_{(s-1)^{\pi}}^{\epsilon_{(s-1)^{\pi}}}\cdots a_{0^{\pi}}^{\epsilon_{0^{\pi}}} \in \mathfrak{K(v)}$ where $\pi \in \Sym(s)$ and $\epsilon_i \in \{\pm 1\}$. For $j \in \mathbb{N}_0$, write  $j = \ell s +r$, where $\ell \in \mathbb{N}_0$ and $r \in [0, s-1]$. Then
 $\varphi_{t^j}(g^{m^j})=a_{(s-1)^{\pi}-r}^{\epsilon_{(s-1)^{\pi}}}\cdots a_{0^{\pi}-r}^{\epsilon_{0^{\pi}}}$. 
 \end{lemma}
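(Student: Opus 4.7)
The approach is induction on $j$, reducing the assertion to a one-step computation. The base case $j = 0$ gives $r = 0$ and both sides equal~$g$.

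For the inductive step, every element of $G$ has root action in $\langle \sigma \rangle$, a cyclic group of order $m$, so its $m$-th power lies in $\St_G(1)$. Iterating, $g^{m^{j-1}} \in \St_G(j-1)$, and hence for any vertex $u$ at level $j-1$ we have $\varphi_{u}(g^{m^j}) = \varphi_{u}\bigl((g^{m^{j-1}})^m\bigr) = \bigl(\varphi_{u}(g^{m^{j-1}})\bigr)^m$ because taking a section commutes with the $m$-th power on the corresponding level stabiliser. Specialising $u = (m-1)^{j-1}$ and applying $\varphi_{m-1}$ yields
\[
   \varphi_{(m-1)^j}(g^{m^j}) = \varphi_{m-1}\Bigl(\varphi_{(m-1)^{j-1}}(g^{m^{j-1}})^m\Bigr).
\]
By the inductive hypothesis, the inner section equals $h := a_{(s-1)^\pi - (r-1)}^{\epsilon_{(s-1)^\pi}} \cdots a_{0^\pi - (r-1)}^{\epsilon_{0^\pi}}$. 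Setting $\pi'(k) := \pi(k) - (r-1) \bmod s$ and $\epsilon'_i := \epsilon_{i + (r-1)}$ makes $\pi' \in \Sym(s)$, and $h = \prod_k a_{\pi'(k)}^{\epsilon'_{\pi'(k)}}$ has exactly the same form as~$g$. The inductive step therefore reduces to the following one-step assertion.

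\medskip

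\noindent\emph{Claim.} If $g = a_{(s-1)^\pi}^{\epsilon_{(s-1)^\pi}} \cdots a_{0^\pi}^{\epsilon_{0^\pi}}$, then $\varphi_{m-1}(g^m) = a_{(s-1)^\pi - 1}^{\epsilon_{(s-1)^\pi}} \cdots a_{0^\pi - 1}^{\epsilon_{0^\pi}}$.

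\medskip

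Applying the Claim to $h$ gives $\varphi_{m-1}(h^m) = \prod_k a_{\pi'(k) - 1}^{\epsilon'_{\pi'(k)}} = \prod_k a_{\pi(k) - r}^{\epsilon_{\pi(k)}}$, which is the desired formula.

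To prove the Claim, let $k_0 := \pi^{-1}(0)$ and split $g = L \cdot a_0^{\epsilon_0} \cdot R$, where $L$ and $R$ are the sub-products of the factors to the left and right of the unique $a_0^{\pm 1}$ factor. Every generator $a_i^{\pm 1}$ with $i \neq 0$ lies in $\St_G(1)$ and has trivial section except at position $m-1$ (where it equals $a_{i-1}^{\pm 1}$); hence $L, R \in \St_G(1)$ with $L_j = R_j = 1$ for $j < m-1$ and
\[
    L_{m-1} = \prod_{k=s-1}^{k_0+1} a_{\pi(k)-1}^{\epsilon_{\pi(k)}}, \qquad R_{m-1} = \prod_{k=k_0-1}^{0} a_{\pi(k)-1}^{\epsilon_{\pi(k)}}.
\]
Multiplying out $g = L a_0^{\epsilon_0} R$ via the section rule $(fh)_j = f_j\, h_{j^{f^\epsilon}}$ recorded in the proof of \cref{Lemma}{lemma length red 1}, one finds that all sections of~$g$ are trivial except $g_{m-1}$ and one further coordinate (position $m-2$ when $\epsilon_0 = +1$, position $0$ when $\epsilon_0 = -1$). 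Substituting into
\[
    (g^m)_{m-1} = g_{m-1}\, g_{g^\epsilon(m-1)}\, g_{(g^\epsilon)^2(m-1)} \cdots g_{(g^\epsilon)^{m-1}(m-1)},
\]
the intermediate trivial sections collapse the product, leaving $L_{m-1}\, a_{s-1}^{\epsilon_0}\, R_{m-1}$. Since $a_{s-1}^{\epsilon_0} = a_{\pi(k_0)-1}^{\epsilon_{\pi(k_0)}}$, this is precisely the claimed shifted product.

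The principal subtlety is the case $\epsilon_0 = -1$: then $g^\epsilon = \sigma^{-1}$, so the iteration for $(g^m)_{m-1}$ traverses positions in the reverse order to the $\epsilon_0 = +1$ case, and the non-trivial section of $g$ other than at $m-1$ sits at position $0$ rather than $m-2$. The two cases nevertheless yield the same final expression because the intermediate sections are trivial, so only the product of the two non-trivial sections (in the correct order) survives.
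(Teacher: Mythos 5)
Your proof is correct and follows essentially the same route as the paper's: both isolate the unique factor $a_0^{\pm 1}$, compute $\varphi_{m-1}(g^m)$ explicitly to see that it is the same word with all subscripts shifted by $-1$ modulo $s$, and then iterate. The only difference is that you formalise the paper's ``the result follows recursively'' as an explicit induction on $j$, using that $g^{m^{j-1}}$ stabilises level $j-1$ so that sections commute with $m$th powers, which is a harmless (and careful) elaboration rather than a new idea.
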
 
 
 \begin{proof}
  Let $i \in \intseg{0}{s-1}$ be such that $0 = i ^{\pi}$. Then
  \begin{align*}
     g&= a_{(s-1)^{\pi}}^{\epsilon_{(s-1)^{\pi}}}\cdots a_{(i+1)^{\pi}}^{\epsilon_{(i+1)^{\pi}}}a_0^{\epsilon_{0}}a_{(i-1)^{\pi}}^{\epsilon_{(i-1)^{\pi}}}\cdots a_{0^{\pi}}^{\epsilon_{0^{\pi}}}\\
     &={\begin{cases}(1,\overset{t-1}\dots,1, a_{(i-1)^{\pi}-1}^{\epsilon_{(i-1)^{\pi}}}\cdots a_{0^{\pi}-1}^{\epsilon_{0^{\pi}}},a_{(s-1)^{\pi}-1}^{\epsilon_{(s-1)^{\pi}}}\cdots a_{(i+1)^{\pi}-1}^{\epsilon_{(i+1)^{\pi}}}, 1,\ldots,1,a_{s-1})\sigma & \text{ if } \epsilon_0 =1\text{ and  }t>0,\\
     (a_{(s-1)^{\pi}-1}^{\epsilon_{(s-1)^{\pi}}}\cdots a_{(i+1)^{\pi}-1}^{\epsilon_{(i+1)^{\pi}}}, 1,\ldots,1,a_{s-1}a_{(i-1)^{\pi}-1}^{\epsilon_{(i-1)^{\pi}}}\cdots a_{0^{\pi}-1}^{\epsilon_{0^{\pi}}})\sigma & \text{ if } \epsilon_0 =1\text{ and  }t=0,\\
(a_{s-1}^{-1},1,\overset{t-1}\dots,1,a_{(s-1)^{\pi}-1}^{\epsilon_{(s-1)^{\pi}}}\cdots a_{(i+1)^{\pi}-1}^{\epsilon_{(i+1)^{\pi}}},  a_{(i-1)^{\pi}-1}^{\epsilon_{(i-1)^{\pi}}}\cdots a_{0^{\pi}-1}^{\epsilon_{0^{\pi}}},1,\ldots,1)\sigma^{-1} & \text{ if } \epsilon_0 =-1\text{ and  }t>0,\\
(a_{(s-1)^{\pi}-1}^{\epsilon_{(s-1)^{\pi}}}\cdots a_{(i+1)^{\pi}-1}^{\epsilon_{(i+1)^{\pi}}}a_{s-1}^{-1},  a_{(i-1)^{\pi}-1}^{\epsilon_{(i-1)^{\pi}}}\cdots a_{0^{\pi}-1}^{\epsilon_{0^{\pi}}},1,\ldots,1)\sigma^{-1} & \text{ if } \epsilon_0 =-1\text{ and  }t=0.
     \end{cases}
     }
   \end{align*}
  By taking the $m$th power of the element $g$ we get
\begin{align*}
     &g^m=\\
     &{\begin{cases}(*,\overset{t}\dots,*,a_{(s-1)^{\pi}-1}^{\epsilon_{(s-1)^{\pi}}}\cdots a_{(i+1)^{\pi}-1}^{\epsilon_{(i+1)^{\pi}}} a_{s-1}a_{(i-1)^{\pi}-1}^{\epsilon_{(i-1)^{\pi}}}\cdots a_{0^{\pi}-1}^{\epsilon_{0^{\pi}}},\#,\ldots,\#) & \text{ if } \epsilon_0 =1\text{ and  }t>0,\\
    (a_{(s-1)^{\pi}-1}^{\epsilon_{(s-1)^{\pi}}}\cdots a_{(i+1)^{\pi}-1}^{\epsilon_{(i+1)^{\pi}}} a_{s-1}a_{(i-1)^{\pi}-1}^{\epsilon_{(i-1)^{\pi}}}\cdots a_{0^{\pi}-1}^{\epsilon_{0^{\pi}}},\#,\ldots,\#) & \text{ if } \epsilon_0 =1\text{ and  }t=0,\\
(* ,\overset{t}\dots,*,a_{(s-1)^{\pi}-1}^{\epsilon_{(s-1)^{\pi}}}\cdots a_{(i+1)^{\pi}-1}^{\epsilon_{(i+1)^{\pi}}}a_{s-1}^{-1}  a_{(i-1)^{\pi}-1}^{\epsilon_{(i-1)^{\pi}}}\cdots a_{0^{\pi}-1}^{\epsilon_{0^{\pi}}},\#,\ldots,\#) & \text{ if } \epsilon_0 =-1\text{ and  }t>0,\\
(a_{(s-1)^{\pi}-1}^{\epsilon_{(s-1)^{\pi}}}\cdots a_{(i+1)^{\pi}-1}^{\epsilon_{(i+1)^{\pi}}}a_{s-1}^{-1}  a_{(i-1)^{\pi}-1}^{\epsilon_{(i-1)^{\pi}}}\cdots a_{0^{\pi}-1}^{\epsilon_{0^{\pi}}},\#,\ldots,\#)\sigma^{-1} & \text{ if } \epsilon_0 =-1\text{ and  }t=0.
     \end{cases}
     }
  \end{align*}
  where 
  \begin{align*}
 * = \begin{cases}a_{(i-1)^{\pi}-1}^{\epsilon_{(i-1)^{\pi}}}\cdots a_{0^{\pi}-1}^{\epsilon_{0^{\pi}}}a_{(s-1)^{\pi}-1}^{\epsilon_{(s-1)^{\pi}}}\cdots a_{(i+1)^{\pi}-1}^{\epsilon_{(i+1)^{\pi}}}a_{s-1} & \text{ if } \epsilon_0 =1, \\
 \vspace{0.001 in} &\\
 a_{s-1}^{-1}a_{(i-1)^{\pi}-1}^{\epsilon_{(i-1)^{\pi}}}\cdots a_{0^{\pi}-1}^{\epsilon_{0^{\pi}}}a_{(s-1)^{\pi}-1}^{\epsilon_{(s-1)^{\pi}}}\cdots a_{(i+1)^{\pi}-1}^{\epsilon_{(i+1)^{\pi}}} &  \text{ if } \epsilon_0 = -1,
 \end{cases}
 \end{align*}
 and
 \begin{align*}
 \# = \begin{cases}a_{s-1}a_{(i-1)^{\pi}-1}^{\epsilon_{(i-1)^{\pi}}}\cdots a_{0^{\pi}-1}^{\epsilon_{0^{\pi}}}a_{(s-1)^{\pi}-1}^{\epsilon_{(s-1)^{\pi}}}\cdots a_{(i+1)^{\pi}-1}^{\epsilon_{(i+1)^{\pi}}} & \text{ if } \epsilon_0 =1, \\
 \vspace{0.001 in} &\\
 a_{(i-1)^{\pi}-1}^{\epsilon_{(i-1)^{\pi}}}\cdots a_{0^{\pi}-1}^{\epsilon_{0^{\pi}}}a_{(s-1)^{\pi}-1}^{\epsilon_{(s-1)^{\pi}}}\cdots a_{(i+1)^{\pi}-1}^{\epsilon_{(i+1)^{\pi}}}a_{s-1}^{-1} &  \text{ if } \epsilon_0 = -1.
 \end{cases}
 \end{align*}
 
 In particular, we have  $\varphi_{t}(g^{m})=a_{(s-1)^{\pi}-1}^{\epsilon_{(s-1)^{\pi}}}\cdots a_{0^{\pi}-1}^{\epsilon_{0^{\pi}}}$, and the result follows recursively.
 \end{proof}

 We recall that $H_u$ denotes the subgroup $\varphi_u(\st_H(u))$ for a vertex $u \in X^*$. By \cref{Lemma}{lemma_k(v)_fractal_and_level-trans}, 
 the group~$\mathfrak{K(v)}$ is fractal, so $
 \mathfrak{K(v)}_u=
 \mathfrak{K(v)}$ for all $u \in X^*$. 
   
 \begin{lemma}\label{lemma cyclic conj} 
Let ${\mathfrak{v}}=t\,\overset{s-1}\dots\, t$ be a constant word, where $t\in[0,m-1]$, and let $H$ be a subgroup of~$\mathfrak{K(v)}$. Assume that $a_{(s-1)^{\pi}}^{\epsilon_{(s-1)^{\pi}}}\cdots a_{0^{\pi}}^{\epsilon_{0^{\pi}}} \in H$ for some $\pi \in \Sym(s)$,  where $\epsilon_i \in \{\pm 1\}$.
   Then the following assertions hold.
   \begin{enumerate}
       \item[(i)] For each $n \in \mathbb{N}$ and vertex~$u$ of level~$ns$, the subgroup~$H_u$ contains a cyclic 
       permutation of the word $a_{(s-1)^{\pi}}^{\epsilon_{(s-1)^{\pi}}}\cdots a_{0^{\pi}}^{\epsilon_{0^{\pi}}}$.
       \item[(ii)] Furthermore, 
       if $\epsilon_i=1$ for some $i\in[0,s]$, then for each $n \in \mathbb{N}$, there is a vertex~$u$ of level~$ns$ such that the cyclic permutation~$w$ of $a_{(s-1)^{\pi}}^{\epsilon_{(s-1)^{\pi}}}\cdots a_{0^{\pi}}^{\epsilon_{0^{\pi}}}$ that is contained in~$H_u$ satisfies the following property: \begin{enumerate}
           \item [(a)] if $t>0$, the cyclic permutation $w$ ends with $a_i$ on the right;
           \item [(b)] if $t=0$, the cyclic permutation $w$ starts with $a_i$ on the left.
       \end{enumerate} 
  \end{enumerate}
 \end{lemma}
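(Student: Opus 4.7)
My plan is to propagate information about cyclic permutations of
$w := a_{(s-1)^\pi}^{\epsilon_{(s-1)^\pi}} \cdots a_{0^\pi}^{\epsilon_{0^\pi}}$
down the tree by iteratively taking first-level sections of $m$th powers. Introduce the shifted words
$w^{(k)} := a_{(s-1)^\pi - k}^{\epsilon_{(s-1)^\pi}} \cdots a_{0^\pi - k}^{\epsilon_{0^\pi}}$
(indices modulo $s$, so $w^{(0)} = w^{(s)} = w$). I would prove by induction on $j \in \N$ the refined statement: \emph{at every level-$j$ vertex $v$, the group $H_v$ contains a cyclic permutation of $w^{(j \bmod s)}$}. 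Part~(i) then follows by taking $j = ns$.

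The inductive step reduces to a direct computation. Given $\tilde w \in H_v$ that is a cyclic permutation of some $w^{(k)}$, write $\tilde w = P a_0^{\epsilon} Q$ with $\epsilon \in \{\pm 1\}$ and with $P, Q$ products in $\{a_i^{\pm 1} : i \geq 1\}$ (hence fixing level~$1$). Since each $a_i = (1, \dots, 1, a_{i-1})$ for $i \geq 1$ is concentrated on the last first-level coordinate, we have $P = (1, \dots, 1, P')$ and $Q = (1, \dots, 1, Q')$, where $P'$ and $Q'$ are obtained from $P$ and $Q$ by shifting every subscript by $-1$. Substituting $a_0 = (1, \dots, 1, a_{s-1}) \sigma$ (or $a_0^{-1} = (a_{s-1}^{-1}, 1, \dots, 1)\sigma^{-1}$) and expanding $\tilde w^m$ componentwise---using that $\tilde w$ has root action $\sigma^{\epsilon}$ of order $m$, so $\tilde w^m$ fixes level~$1$---would yield: the rightmost first-level section equals the fully shifted word $P' a_{s-1}^{\epsilon} Q'$ (in agreement with \cref{Lemma}{lem:right-projection}), while the non-rightmost first-level sections all equal $Q' P' a_{s-1}$ when $\epsilon = +1$ and $a_{s-1}^{-1} Q' P'$ when $\epsilon = -1$. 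Each of these is a cyclic permutation of the shifted $\tilde w$, hence of $w^{(k+1)}$. Passing from $H_v$ to $H_{vx}$ via the standard identification $(H_v)_x = H_{vx}$ then closes the induction.

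For part~(ii), assuming $\epsilon_i = +1$, I would take the vertex
$u := u_1 u_2 \cdots u_{ns}$
defined by $u_{i+1} := 0$ and $u_j := m-1$ for $j \neq i+1$, and trace the above sections along this path. By \cref{Lemma}{lem:right-projection}, after the first $i$ (rightmost) descents the current section equals $w^{(i)}$ itself, and its pivotal exponent is precisely $\epsilon_i = +1$. The non-rightmost descent at step $i+1$ then produces $Q' P' a_{s-1}$, a cyclic permutation of $w^{(i+1)}$ that places the letter $a_{s-1}$ (with exponent $+1$) at the rightmost slot. The remaining $s - i - 1 + (n-1)s$ rightmost descents only shift subscripts without disturbing the cyclic rotation. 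After all $ns$ descents, the section is therefore a cyclic permutation of $w^{(ns \bmod s)} = w$ whose rightmost letter is the one occupying, in $w$, the position originally held by $a_0$ in $w^{(i)}$: this letter is $a_i^{\epsilon_i} = a_i$, as required.

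The principal obstacle will be the explicit computation of the non-rightmost first-level sections, since \cref{Lemma}{lem:right-projection} only covers the rightmost case. The decisive simplification is the concentrated form $(1, \dots, 1, \cdot)$ of $P$ and $Q$, which collapses the componentwise product defining $\tilde w^m$ into essentially a one-dimensional calculation and makes both the rightmost and non-rightmost sections immediately transparent.
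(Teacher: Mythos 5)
Your proof is correct and takes essentially the same route as the paper: both arguments take $m$th powers and pass to first-level sections, using that the rightmost section is the shifted word (\cref{Lemma}{lem:right-projection}) while every other section is a cyclic permutation of it ending in $a_{s-1}^{\epsilon}$, and then iterate down to level $ns$, inserting one non-rightmost descent exactly at the level where the pivot exponent is $+1$ to prove (ii). Your choice of the child $0$ instead of the paper's $m-2$ at that step is immaterial, since (as your computation shows) all non-rightmost sections coincide.
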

 
 \begin{proof}
 (i) Let $g = a_{(s-1)^{\pi}}^{\epsilon_{(s-1)^{\pi}}}\cdots a_{0^{\pi}}^{\epsilon_{0^{\pi}}} \in H$. Then $0 = i ^{\pi}$ for some $i \in \intseg{0}{s-1}$.  Observe from the proof of \cref{Lemma}{lem:right-projection} that 
 \[\varphi_{t}(g^m)=a_{(s-1)^{\pi}-1}^{\epsilon_{(s-1)^{\pi}}}\cdots a_{(i+1)^{\pi}-1}^{\epsilon_{(i+1)^{\pi}}}a_{s-1}^{\epsilon_{0}}a_{(i-1)^{\pi}-1}^{\epsilon_{(i-1)^{\pi}}}\cdots a_{0^{\pi}-1}^{\epsilon_{0^{\pi}}}\]
 and that $\varphi_j(g^m)$ is a cyclic permutation of $\varphi_{t}(g^m)$ for every $j \in \intseg{0}{m-1}$. By repeating the process of taking powers we get that ${\psi}_s(g^{m^s})=(g_0,\dots,g_{m^s-1})$ with $g_{\tau}= g$, for the component~$\tau$ corresponding to the vertex $t^s$, and $g_k$ is a cyclic permutation of the word $a_{(s-1)^{\pi}}^{\epsilon_{(s-1)^{\pi}}}\cdots a_{0^{\pi}}^{\epsilon_{0^{\pi}}}$ for $k\in [ 0,m^s-1]$. 
 
\noindent (ii)  We will only consider the case $t>0$, as the case $t=0$ is analogous. In particular,  if $\epsilon_0 =1$, we note from the proof of \cref{Lemma}{lem:right-projection} that
 \[
 \varphi_{t-1}(g^m)=a_{(i-1)^{\pi}-1}^{\epsilon_{(i-1)^{\pi}}}\cdots a_{0^{\pi}-1}^{\epsilon_{0^{\pi}}}a_{(s-1)^{\pi}-1}^{\epsilon_{(s-1)^{\pi}}}\cdots a_{(i+1)^{\pi}-1}^{\epsilon_{(i+1)^{\pi}}}a_{s-1},
 \]
 and by \cref{Lemma}{lem:right-projection} we see that
 \begin{align*}
 \varphi_{(t-1)t^{s-1}}(g^m) = 
 a_{(i-1)^{\pi}}^{\epsilon_{(i-1)^{\pi}}}\cdots a_{0^{\pi}}^{\epsilon_{0^{\pi}}}a_{(s-1)^{\pi}}^{\epsilon_{(s-1)^{\pi}}}\cdots a_{(i+1)^{\pi}}^{\epsilon_{(i+1)^{\pi}}}a_{0}.
 \end{align*}
 More generally, suppose  that $\epsilon_j =1$ for $j\in[0,s]$ and let $\ell\in[0,s]$ be such that $\ell^\pi=j$. Writing $v_j= t\overset{j}\cdots t$ and $w_j=(t-1)t\,\overset{s-j-1}\cdots\, t$, we recall from \cref{Lemma}{lem:right-projection} that
 \[
 \varphi_{v_j}(g^{m^j})=a_{(s-1)^{\pi}-j}^{\epsilon_{(s-1)^{\pi}}}\cdots a_{(\ell+1)^{\pi}-j}^{\epsilon_{(\ell+1)^{\pi}}}a_{0}a_{(\ell-1)^{\pi}-j}^{\epsilon_{(\ell-1)^{\pi}}}\cdots a_{0^{\pi}-j}^{\epsilon_{0^{\pi}}}.
 \]
 Then  similar to the above  we see that
 \begin{align*}
 \varphi_{v_jw_j}(g^m) = 
 a_{(\ell-1)^{\pi}}^{\epsilon_{(\ell-1)^{\pi}}}\cdots a_{0^{\pi}}^{\epsilon_{0^{\pi}}}a_{(s-1)^{\pi}}^{\epsilon_{(s-1)^{\pi}}}\cdots a_{(\ell+1)^{\pi}}^{\epsilon_{(\ell+1)^{\pi}}}a_{j},
 \end{align*}
 and as $u_j:=v_jw_j$ is a vertex of level $s$, we have that $H_{u_j}$ contains a cyclic permutation of $a_{(s-1)^{\pi}}^{\epsilon_{(s-1)^{\pi}}}\cdots a_{0^{\pi}}^{\epsilon_{0^{\pi}}}$  that  ends with $a_j$ on the right.

 Now, by using \cref{Lemma}{lem:right-projection} repeatedly,  one can see that the result holds for  level~$ns$ of~$T$, for $n>1$.
 \end{proof}

 \begin{prop} \label{prop g equiv product}
  Let ${\mathfrak{v}}=t\,\overset{s-1}\dots\, t$ be a constant word, where $t\in[0,m-1]$, and let $ g \in \mathfrak{K(v)}$ be such that $g \equiv a_{s-1}^{\epsilon_{s-1}}\cdots a_{0}^{\epsilon_{0}}$, where $\epsilon_{i}\in\{\pm 1\}$. Then there exists a vertex $u$ of  level $ns$ in~$T$,  for some $n \in \mathbb{N}_0$, and an element $g' \in \textup{st}_{\langle g \rangle}(u)$ such that $\varphi_u(g')=a_{(s-1)^{\pi}}^{\epsilon_{(s-1)^{\pi}}}\cdots a_{0^{\pi}}^{\epsilon_{0^{\pi}}}$ for some $\pi \in \Sym(s)$. 
 \end{prop}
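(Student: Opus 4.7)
The plan is to write $g = \bar w \, z$ where $\bar w = a_{s-1}^{\epsilon_{s-1}} \cdots a_0^{\epsilon_0}$ and $z \in G'$, and to induct on the word length~$|z|$ over the alphabet~$S$. The base case $|z| = 0$ is immediate: $g = \bar w$, so take $n = 0$, $u$ the root, $g' = g$, and $\pi = \mathrm{id}$. For the inductive step with $|z| \geq 1$, I would examine the section of $g^{m^s}$ at the rightmost vertex~$v$ of level~$s$; iterating the computation from the proofs of \cref{Lemma}{lemma g equiv product} and \cref{Lemma}{lem:right-projection} gives $\varphi_v(g^{m^s}) = \bar w\, z^{(s)}$, where the corrections $z^{(k)} \in G'$ are defined recursively by $z^{(0)} = z$ and $z^{(k+1)} = z^{(k)}_0 z^{(k)}_1 \cdots z^{(k)}_{m-1}$, with $\psi_1(z^{(k)}) = (z^{(k)}_0, \ldots, z^{(k)}_{m-1})$. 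That $z^{(k+1)} \in G'$ follows from \cref{Lemma}{lemma commutator product}, and \cref{Lemma}{lemma length red 1} shows $|z^{(k)}|$ is non-increasing.

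The main obstacle is to establish the strict inequality $|z^{(s)}| < |z|$ needed for the induction. Here I would invoke \cref{Lemma}{lemma length red 3}: if a geodesic word for~$z^{(k)}$ contains both~$a_0$ and~$a_0^{-1}$, then $|z^{(k+1)}| < |z^{(k)}|$. Since $z^{(k)} \in G'$ and the abelianisation of~$G$ has~$\Z^s$ as a quotient in which the generators remain independent, every generator has zero exponent sum in~$z^{(k)}$, so any occurrence of~$a_0$ forces a balancing~$a_0^{-1}$. In the residual case where~$z^{(k)}$ involves no~$a_0^{\pm 1}$, one has $z^{(k)} \in \langle a_1, \ldots, a_{s-1}\rangle \leq \St_G(1)$ and $\psi_1(z^{(k)}) = (1, \ldots, 1, (z^{(k)})^*)$, with $(z^{(k)})^* \in G' \cap \langle a_0, \ldots, a_{s-2}\rangle$ obtained by shifting each generator index down by one; hence $z^{(k+1)} = (z^{(k)})^*$, and the minimum index of generators occurring strictly decreases. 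If $z = z^{(0)}$ has minimum index $j^* \in \{0, \ldots, s-1\}$, then after $j^*$ shifts the minimum reaches~$0$ and the subsequent iteration triggers the strict length decrease; since $j^* + 1 \leq s$, this occurs within the $s$ iterations, so $|z^{(s)}| < |z|$.

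With the strict inequality in hand, applying the inductive hypothesis to $\bar w\, z^{(s)} = \varphi_v(g^{m^s}) \in \varphi_v(\st_{\langle g \rangle}(v))$ furnishes a vertex~$u'$ of level~$n's$ in~$T_v$ and an element $g'' \in \st_{\langle \bar w z^{(s)}\rangle}(u')$ with $\varphi_{u'}(g'')$ of the required form $a_{(s-1)^\pi}^{\epsilon_{(s-1)^\pi}}\cdots a_{0^\pi}^{\epsilon_{0^\pi}}$. The corresponding power of $g^{m^s}$ then stabilises the vertex $u = vu'$ of level~$(n'+1)s$ and projects onto the same section, closing the induction. The principal hurdle, as indicated, is the case when~$z^{(k)}$ avoids~$a_0^{\pm 1}$ entirely, which is resolved by the index-shift argument above within the $s$ available iterations.
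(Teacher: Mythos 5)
Your induction is driven by the identity $\varphi_{v}(g^{m^{s}})=\bar w\,z^{(s)}$ with $z^{(k+1)}=z^{(k)}_{0}\cdots z^{(k)}_{m-1}$, and by the consequent strict inequality $|z^{(s)}|<|z|$. That identity is false, and this is a genuine gap. The paper's own computation in the proof of \cref{Lemma}{lemma g equiv product} already shows that the sections of the correction $z$ do not stay as a clean right factor: for $\epsilon_{0}=-1$ the last coordinate is $a_{s-2}^{\epsilon_{s-1}}\cdots a_{0}^{\epsilon_{1}}\,z_{m-2}\cdots z_{0}\,a_{s-1}^{-1}z_{m-1}$, i.e.\ the new correction is $a_{s-1}(z_{m-2}\cdots z_{0})a_{s-1}^{-1}z_{m-1}$, a conjugate-laden product rather than $z_{0}\cdots z_{m-1}$. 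Even with all $\epsilon_{i}=+1$ the phenomenon appears from the second level on, because after one step the letter $a_{0}$ sits in the middle of the shifted word: in the classical Basilica case, $h=a_{0}a_{1}(y_{0},y_{1})$ gives $\varphi_{1}(h^{2})=a_{1}y_{0}a_{0}y_{1}=a_{1}a_{0}\,(a_{0}^{-1}y_{0}a_{0})\,y_{1}$. So the true correction after each level is a product of conjugates of the sections of the previous one; each conjugation can add length, and over $s$ levels these gains can cancel the single guaranteed loss from \cref{Lemma}{lemma length red 3}. Hence the strict decrease $|z^{(s)}|<|z|$ is not established for the element you actually obtain at the vertex $v$, and the induction on $|z|$ does not close. (A smaller, fixable point: \cref{Lemma}{lemma length red 3} as stated needs $a_{0}$ to occur before $a_{0}^{-1}$; zero exponent sum alone gives this only up to passing to the inverse.)

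The case in which no length reduction occurs is exactly the hard part of this proposition, and your proposal has no mechanism for it. The paper instead inducts on $|g|$ itself: it forms $\alpha_{k}=\varphi_{k}(g^{m})$, uses \cref{Lemma}{lemma length red 2} and \cref{Lemma}{lemma length red 3} to finish by induction whenever some $|\alpha_{k}|<|g|$, and when $|\alpha_{k}|=|g|$ for all $k$ it carries out a combinatorial analysis of a geodesic word $w_{g}$ (each generator's occurrences must be concentrated in a single first-level coordinate, which forces $w_{g}$ to have the shape $w_{1}a_{0}^{\epsilon_{0}}w_{2}$ in disjoint blocks of generators), and only then extracts the permuted product $a_{(s-1)^{\pi}}^{\epsilon_{(s-1)^{\pi}}}\cdots a_{0^{\pi}}^{\epsilon_{0^{\pi}}}$ — note that the conclusion allows an arbitrary $\pi$, precisely because the corrections get reshuffled rather than returning the original ordering, contrary to your claim that the section equals $\bar w$ times a correction. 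To repair your argument you would need either a proof that the genuine correction term (with its conjugations) still shrinks, or a separate treatment of the length-stable case along the lines of the paper's geodesic analysis.
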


\begin{proof}
 The proof proceeds by induction on the length of $g$. Recall from 
 \cref{Lemma}{lemma_abelian}
 that $\mathfrak{K(v)}/\mathfrak{K(v)}'=\langle a_0\mathfrak{K(v)}',\ldots,a_{s-1}\mathfrak{K(v)}'\rangle\cong \mathbb{Z}^s$. Hence if $g$ is equivalent to $a_{s-1}^{\epsilon_{s-1}}\cdots a_{0}^{\epsilon_{0}}$ then $|g| > s-1$, since any word containing each of the distinct generators of $\mathfrak{K(v)}$ has length at least~$s$. Assume that $\vert g \vert = s$. Then 
  \[
   g \in \big\{a_{(s-1)^{\pi}}^{\epsilon_{(s-1)^{\pi}}}\cdots  a_{0^{\pi}}^{\epsilon_{0^{\pi}}} \mid \pi \in \Sym(s)\big\}
  \]
 and the result follows trivially by choosing $u$ as the root vertex.
 Now, assume that $\vert g \vert > s$. Since the exponent sum of~$a_0$ in any word representing $g$ is $\epsilon_0$, we can write $g = (g_0,\dots, g_{m-1})\sigma^{\epsilon_0}$ with $g_0, \dots,g_{m-1}\in \mathfrak{K(v)}$. We get
   \[
    g^m = \begin{cases}(g_0g_1\cdots g_{m-1},\,g_1\cdots g_{m-1}g_0\,,\,\dots\,,\, g_{m-1}g_0g_1\cdots g_{m-2}) & \text{ if } \epsilon_0 =1,\\
    (g_0g_{m-1}g_{m-2}\cdots g_1,\,g_1g_0g_{m-1}
    \cdots g_2\,,\,\dots\,,\, g_{m-1}g_{m-2}\cdots g_{0}) & \text{ if } \epsilon_0 = -1.
    \end{cases}
   \]
 For every $k \in \intseg{0}{m-1}$, we set $\alpha_k = \varphi_{k}(g^m)$.
 It follows from \cref{Lemma}{lemma g equiv product} that 
 \[
 \alpha_k \equiv a_{s-2}^{\epsilon_{s-1}}\cdots  a_0^{\epsilon_1} a_{s-1}^{\epsilon_0}
 \]
 for all $k \in \intseg{0}{m-1}$. Furthermore $\vert \alpha_k \vert \leq \vert g \vert$ for all $k \in \intseg{0}{m-1}$ by \cref{Lemma}{lemma length red 2}. If there exists $k \in \intseg{0}{m-1}$ such that $\vert \alpha_k \vert < \vert g \vert$, then  it follows by induction that there exists a vertex $u$ of level $ns$ in $T$, for some $n \in \mathbb{N}_0$, and $g' \in \st_{\langle \alpha_k \rangle}(u)$ such that
 \[
 \varphi_u(g')=a_{(s-1)^{\pi}-1}^{\epsilon_{(s-1)^{\pi}}}a_{(s-2)^{\pi}-1}^{\epsilon_{(s-2)^{\pi}}} \cdots a_{0^{\pi}-1}^{\epsilon_{0^{\pi}}}
 \]
for some $\pi \in \Sym(s)$.  Using 
 \cref{Lemma}{lem:right-projection}, 
 we get that
 \[\varphi_{t^{s-1} }((g')^{m^{s-1}}) = a_{(s-1)^{\pi}}^{\epsilon_{(s-1)^{\pi}}}a_{(s-2)^{\pi}}^{\epsilon_{(s-2)^{\pi}}} \cdots a_{0^{\pi}}^{\epsilon_{0^{\pi}}},\]
 and hence the result follows.
 
 Assume that $\vert \alpha_k \vert = \vert g \vert$ for all $k \in \intseg{0}{m-1}.$ Since $\vert \alpha_k \vert \leq \sum \limits _{\ell =0}^{m-1} \vert g_{\ell} \vert$, in particular, we get
 \[
   \sum \limits _{\ell =0}^{m-1} \vert g_{\ell} \vert = \vert g \vert.
 \]
 Let $w_g \in S^*$ be a geodesic word representing~$g$. Since for each $i\in[0,s-1]$ the element~$a_i^{\epsilon_i}$ contributes $a_{i-1}^{\epsilon_i}$ in exactly one component, we can obtain 
 words $w_{g_k}$ representing~$g_k$ 
 by substituting~$a_i^{\epsilon_i}$ in~$w_g$ with~$a_{i-1}^{\epsilon_i}$ in the appropriate component.  Notice that $\vert g_k\vert \leq \vert w_{g_k} \vert$ for every $k \in \intseg{0}{m-1}$. Moreover, the words $w_{g_k}$ are geodesic. Indeed, 
 \[
  \sum \limits_{k = 0}^{m-1}\vert g_k\vert \leq \sum \limits_{k = 0}^{m-1}\vert w_{g_k}\vert \leq \vert w_g  \vert = \vert g \vert =  \sum \limits _{\ell =0}^{m-1} \vert g_{\ell} \vert,
 \]
 which forces that $\vert w_{g_k} \vert = \vert g_k \vert.$ 
 Now,
 set 
 \[
  w_{\alpha_k} = \begin{cases} 
                  w_{g_k}w_{g_{k+1}}\cdots w_{g_{k+m-1}} & \text{ if } \epsilon_0 = 1,\\
                  w_{g_k}w_{g_{k-1}}\cdots w_{g_{k-(m-1)}} & \text{ if } \epsilon_0 = -1.
  \end{cases}
 \]
 Clearly $w_{\alpha_k}$ represents $\alpha_k$. Therefore $\vert \alpha_k \vert \leq \vert w_{\alpha_k} \vert$. Furthermore, 
   \[
    \vert w_{\alpha_k} \vert \leq \sum \limits_{\ell=0}^{m-1}\vert w_{g_{\ell}} \vert= \sum \limits _{\ell =0}^{m-1} \vert g_{\ell} \vert = \vert g \vert = \vert \alpha_k \vert .
   \] 
  Thus $\vert \alpha_k \vert = \vert w_{\alpha_k} \vert$ and $w_{\alpha_k}$ is a geodesic word. 
   
   Now, we claim that in order to prove the result, it suffices to consider the situation in which for every $i \in \intseg{0}{s-1}$ there exists a unique $k \in \intseg{0}{m-1}$ such that $w_{g_k}$ contains a non-trivial power of $a_i$. First we consider the case when $i=0$. Assume to the contrary that there exist 
  distinct $k_1,k_2 \in \intseg{0}{m-1}$  such that $w_{g_{k_1}}$ and $w_{g_{k_2}}$
   contain non-trivial powers of~$a_0$. We can reduce to the following two cases.

 \smallskip
 
 \underline{Case 1:}
 Suppose that there exist 
distinct $k_1,k_2 \in \intseg{0}{m-1}$ such that $w_{g_{k_1}}$ and $w_{g_{k_2}}$ contain~$a_0$ and $a_0^{-1}$ respectively.
 Then for some $k\in \intseg{0}{m-1}$, the word~$w_{\alpha_k}$ contains a subword of the form $a_0 w a_0^{-1}$ with $w \in S^*$. If $\alpha_k^m = (\beta_0,\dots,\beta_{m-1})$ then, 
 by \cref{Lemma}{lemma length red 2}, \cref{Lemma}{lemma length red 3} and \cref{Lemma}{lemma length red 3 for t equals 0},
 we obtain that $\vert \beta_\ell \vert < \vert \alpha \vert$ for 
 $\ell \in \intseg{0}{m-1}$. 
 Again, the result follows by induction.

  \smallskip
  
 \underline{Case 2:} Suppose there exist distinct
 $k_1,k_2 \in \intseg{0}{m-1}$ such that $w_{g_{k_1}}$ and $w_{g_{k_2}}$ contain $a_0$. Recall  that $\mathfrak{K(v)}/\mathfrak{K(v)}'=\langle a_0\mathfrak{K(v)}',\ldots,a_{s-1}\mathfrak{K(v)}'\rangle\cong \mathbb{Z}^s$. Hence, as
 the exponent sum of $a_{1}$ in any word representing $g$ is $\epsilon_1$, the exponent sum of~$a_0$ in $w_{\alpha_k}$ is equal to $\epsilon_1$ for all $k \in \intseg{0}{m-1}$. This implies that there exists
 $k_3 \in \intseg{0}{m-1}$ such that $w_{g_{k_3}}$
 contains~$a_{0}^{-1}$, and we are in the previous case. Analogously, the same argument works if both
 $w_{g_{k_1}}$ and $w_{g_{k_2}}$ 
 contain $a_0^{-1}$. 
 
  \smallskip
  
 We reduce to the case where there exists a unique $k \in \intseg{0}{m-1}$ such that $w_{g_k}$ contains a non-trivial power of~$a_0$. By inducting on $i \in \intseg{0}{s-1}$, assume that there exists a unique $k \in \intseg{0}{m-1}$ such that $w_{g_k}$ contains a non-trivial power of $a_{i-1}$. Then suppose that there exist  distinct $k_1 ,k_2 \in \intseg{0}{m-1}$  such that $w_{g_{k_1}}$ and $w_{g_{k_2}}$ contain non-trivial powers of~$a_{i}$. We can find 
 $k_3 \in \intseg{0}{m-1}$ such that $w_{\alpha_{k_3}}$
 contains a subword of the form $a_{i}
 ^{\ell_1}wa_{i}
 ^{\ell_2}$, where $\ell_1,\ell_2 \in \mathbb{Z}\backslash \{0\}$ and $w \in S^*$ with exponent sum of~$a_0$ in~$w$ is not equal to $0 \bmod m$. Thanks to \cref{Lemma}{lemma g equiv product}, we may replace $g$ with $\alpha_{k_3}$. Then we find more than one $w_{g_k}$ containing non-trivial powers of~$a_{i-1}$, contradicting the induction hypothesis 
 and hence proving the claim. 
 
 Thus, we reduce to the situation in which for every $i \in \intseg{0}{s-1}$ there exists a unique $k \in \intseg{0}{m-1}$ such that $w_{g_k}$ contains a non-trivial power of~$a_i$. An easy computation yields that 
 $w_{g}$ does not contain a subword of the form $a_{i}^{\ell_1}wa_{i}^{\ell_2}$, for some $i \in \intseg{0}{s-1}$  where $\ell_1,\ell_2 \in \mathbb{Z}\backslash \{0\}$ and $w \in S^*$ with the exponent sum of~$a_0$ in~$w$ being not equal to $0 \bmod m$. Hence,  we conclude that $w_g$ must be of the form
  \[
    {w_1}(a_{i_1},\dots, a_{i_r})a_0^{\epsilon_0} {w_2}(a_{i_{r+1}},\dots, a_{i_{s-1}}),
  \]
 where {$w_1$ and $w_2$ are words in the given elements, and} $\{i_1,\dots,i_r,i_{r+1},\dots,i_{s-1}\}= \intseg{1}{s-1}$ such that the intersection $\{i_1,\dots,{i_r}
 \} \cap \{i_{r+1},\dots,i_{s-1}\}$ is empty.
 Consider the element $\alpha_{t}$ obtained from the element $g$ above. Then the corresponding $w_{\alpha_t}$ has the form 
\[
   w_1(a_{i_{1}-1},\dots, a_{i_{r}-1})a_{s-1}^{\epsilon_0} w_2(a_{i_{r+1}-1},\dots, a_{i_{s-1}-1}),
  \]
 and continuing the above procedure with this word, yields  the element $a_{(s-1)^{\pi}}^{\epsilon_{(s-1)^{\pi}}}\cdots a_{0^{\pi}}^{\epsilon_{0^{\pi}}} \in H_u$ for some $u$ of level $ns$ in $T$, for some $n \in \mathbb{N}_0$.
\end{proof}

 \begin{theorem}\label{prop prodense}
Let ${\mathfrak{v}}=t\,\overset{s-1}\dots \,t$ be a constant word, where $t\in[0,m-1]$.  If $H$ is a prodense subgroup of $\mathfrak{K(v)}$ then $H = \mathfrak{K(v)}$.
 \end{theorem}

 \begin{proof} 
  Note that $H\mathfrak{K(v)}' =\mathfrak{K(v)}$ as $H$ is a prodense subgroup. Therefore there exists an element 
  $z \in \mathfrak{K(v)}'$ such that $ a_{s-1}\cdots a_0z\in H$. By an application of \cref{Proposition}{prop g equiv product}, we can find $u\in T$ such that $H_{u}$ contains $a_{(s-1)^{\pi}}\cdots a_{0^{\pi}}$ for some $\pi \in \Sym(s)$. We set $g=a_{(s-1)^{\pi}}\cdots a_{0^{\pi}}$. Thanks to \cite[Lemma~3.1]{Fra20}, the subgroup~$H_u$ is again a prodense subgroup of~$\mathfrak{K(v)}$. Without loss of generality, we replace $H$ with~$H_u$.   
   
  Again, as $H$ is prodense, for some $\widetilde{z}\in \mathfrak{K(v)}'$ we similarly have  $a_{s-1}\cdots a_1a_0^{-1}\widetilde{z}\in H$. By \cref{Proposition}{prop g equiv product}, there exists a vertex $u$ at level $ns$, for some $n \in \mathbb{N}$, such that $H_u$ contains an element $h_0$ of the form 
  \[
    h_0 = a_{(s-1)^{\tau_0}}^{\,\epsilon_{(s-1)^{\tau_{0}}}}\cdots a_{0^{\tau_0}}^{\,\epsilon_{0^{\tau_0}}},
  \]
 where $\tau_0 \in \Sym(s)$, 
 with $\epsilon_{i^{\tau_0}}=-1$ if $i^{\tau_0}=0$ and $\epsilon_{i^{\tau_0}}=1$ otherwise. Now, by \cref{Lemma}{lemma cyclic conj}(i), the subgroup~$H_u$ also contains some cyclic permutation of the element~$g$. By abuse of notation, we replace~$g$ with this cyclic permutation of~$g$.  We again replace $H$ with $H_u$. Now $H$ contains the elements $g$ and $h_0$.
 Repeating this argument $s-1$ times, we may assume that
 $H$ contains the elements $g,h_0,\dots,h_{s-1}$, where
 \[
    h_j = a_{(s-1)^{\tau_j}}^{\,\epsilon_{(s-1)^{\tau_{j}}}}\cdots a_{0^{\tau_j}}^{\,\epsilon_{0^{\tau_j}}},
  \]
 where $\tau_j \in \Sym(s)$ 
 with $\epsilon_{i^{\tau_j}}=-1$ if $i^{\tau_j}=j$ and $\epsilon_{i^{\tau_j}}=1$ otherwise.

 For the remainder of the proof, we will assume that $t>0$; the case $t=0$ is analogous, as accounted for in \cref{Remark}{rmk:appendix}.
  Appealing to \cref{Lemma}{lemma cyclic conj}(ii), we now choose a vertex~$v$, with $v$ of level $\widetilde{n}s$ for some  $\widetilde{n} \in \mathbb{N}$, such that the cyclic permutation of~$g$ that is contained in $H_v$ ends with~$a_0$ on the right. We rename this element~$g$. 
  So we have $g\in H_v$ and by \cref{Lemma}{lemma cyclic conj}(i) we have a cyclic permutation of each of the elements $h_0,\ldots,h_{s-1}$ in~$H_v$. 
  By abuse of notation, we rename these cyclic permutations $h_0,\ldots,h_{s-1}$ respectively.
  As before we replace $H$ with~$H_v$. Now $H$ contains the elements $g,h_0,\dots,h_{s-1}$, where $g$ ends with~$a_0$ on the right. 
   
   For each $n \in \N$, let 
   $v_n=t \,\overset{n}\dots\, t$.  
   It follows from \cref{Lemma}{lem:right-projection} that for  $d\in\mathbb{N}$, $\varphi_{v_{ds}}(g^{m^{ds}})=g$ and $\varphi_{v_{ds}}(h_i^{\,m^{ds}})=h_i$ where $i\in[0,s-1]$. Furthermore, for any element  $f \in \mathfrak{K(v)}$ of the form
   \[
   f=a_{\iota_1}^{\epsilon_1} \cdots a_{\iota_b}^{\epsilon_b},
   \] 
   for pairwise distinct $\iota_1,\ldots,\iota_{b}\in[0,s-1]$ with ${b}\in[1,s]$ and $\epsilon_i\in\{\pm 1\}$,
   we can consider its contribution to $H_{v_n}$. Specifically, if $f \in \St_{\mathfrak{K(v)}}(1)$, we simply consider its image under $\varphi_{t}$. If $f \notin \St_{\mathfrak{K(v)}}(1)$, then we consider $\varphi_{t}(f^m)$. We refer to this general process as projecting along the path $t_{\infty}:=tt\cdots$.
   By projecting along  this path, we observe that if $f \in H_{v_j}$, for $j \in \mathbb{N}$, then $f \in H_{v_{j+s}}$; 
   compare the proof of \cref{Lemma}{lem:right-projection}. This observation will be used repeatedly throughout the proof without special mention.

    The strategy of the proof is now to consider the contributions  from 
    \[
    \langle g\rangle\,,\,\langle h_0\rangle\,,\,\ldots\,,\,\langle h_{s-1}\rangle
    \]
    to $H_{v_n}$, and to multiply them appropriately to separate the generators $a_0,\ldots,a_{s-1}$. More specifically, if  for some $n\in\mathbb{N}$, suppose we have non-trivial distinct elements 
    $\alpha,\beta\in H_{v_n}$ of the form
    \[
    \alpha=a_{i_1}^{\epsilon_1}\cdots a_{i_q}^{
    \epsilon_q},\qquad  \beta=a_{j_1}^{\delta_1}\cdots a_{j_r}^{\delta_r}
    \]
    where
    $\epsilon_i,\delta_j\in\{\pm 1\}$ and $2\le {q},r\le s$, with $i_1,\ldots,i_{q}\in[0,s-1]$  pairwise distinct, and also $j_1,\ldots,j_r\in[0,s-1]$  pairwise distinct. We consider two situations below, where we assume always that $\widetilde{\alpha},\widetilde{\beta}$ are non-trivial.
    \begin{enumerate}
        \item [(i)] If
        $\alpha= \widetilde{\alpha}a_0$ and $\beta= \widetilde{\beta}a_0^{-1}\widehat{\beta}$, then
        \[
        \beta\alpha=\widetilde{\beta}a_0^{-1}\widehat{\beta}\widetilde{\alpha}a_0
        \]
       yields
        \[
        \varphi_{t}(\widetilde{\beta})\in H_{v_{n+1}}, 
        \]
        and hence
        \[
        \widetilde{\beta}\in H_{v_{n+s}}\qquad\text{and}\qquad a_0^{-1}\widehat{\beta}\in H_{v_{n+s}}.
        \]

          \item [(ii)] If
       $       \alpha= a_0\widetilde{\alpha}$ and $\beta= \widehat{\beta}a_0^{-1}\widetilde{\beta}$, from 
        \[
        \alpha\beta=a_0\widetilde{\alpha}\widehat{\beta}a_0^{-1}\widetilde{\beta},
        \]
        we obtain
        \[
        \varphi_{t}(\widetilde{\beta})\in H_{v_{n+1}}, 
        \]
        and similarly,
        \[
        \widetilde{\beta}\in H_{v_{n+s}}\qquad\text{and}\qquad \widehat{\beta}a_0^{-1}\in H_{v_{n+s}}.
        \]
    \end{enumerate} 
    In other words, upon replacing $H_{v_n}$ with $H_{v_{n+s}}$ we have split $\beta\in H_{v_{n+s}}$ into two non-trivial parts. The plan is to repeatedly perform such operations as in (i) and (ii) above to keep splitting products of generators. Eventually we will end up with $a_0,\ldots,a_{s-1}\in H_u$ for some $u$, which gives $H_u=\mathfrak{K(v)}$ and equivalently that $H=\mathfrak{K(v)}$, as required. 
  
  \medskip

  We begin by first considering the contributions from $\langle g\rangle$ and $\langle h_0\rangle$ along the path $t_{\infty}$ of the tree.
For convenience, write
  \[
  g=a_{i_1}\cdots a_{i_{s-1}}a_0\qquad\text{and}\qquad  h_0=a_{j_1}\cdots a_{j_{d-1}}a_0^{-1}a_{j_{d+1}}\cdots a_{j_s},
  \]
  for some $d\in[1,s]$, where $\{i_1,\ldots,i_{s-1}\}=\{j_1,\ldots,j_{d-1},j_{d+1},\ldots,j_s\}=[1,s-1]$.

   \medskip
    
    \underline{Case 1:} Suppose $1<d<s$.
    Then we are in situation (i) from above, and   it follows that   
   \[
   a_{j_1-1}\cdots a_{j_{d-1}-1}\in H_{v_1}\quad\text{and}\quad a_{s-1}^{-1}a_{j_{d+1}-1}\cdots a_{j_s-1}\in H_{v_1}.
   \]
   We will now use (the projections of) these two parts of $h_0$ to split $g$ into two non-trivial parts.

   Let $j:=j_{d-1}$. We consider the contribution of $a_{j_1}\cdots a_{j_{d-1}}$ to $H_{v_j}$. 
   In other words, we project along the path $t_{\infty}$ down to level~$j$, which gives
   \[
   a_{j_1-j}\cdots a_{j_{d-2}-j
   }a_{0}\in H_{v_j}.
   \]
   Recalling that $g=a_{i_1}\cdots a_{i_{s-1}}a_0$, we have that $i_r=j$ for some $r\in[1,s-1]$. Then setting
   \[
  \beta^{-1}:= \varphi_{v_j}(g^{m^j})=a_{i_1-j}\cdots a_{i_{r-1}-j}a_0a_{i_{r+1}-j}\cdots a_{i_{s-1}-j}a_{s-j}\in H_{v_j}
   \]
   and 
  \[
  \alpha:= a_{j_1-j}\cdots a_{j_{d-2}-j
   }a_{0}\in H_{v_j},
  \]
  it follows from situation (i) that
  \[
   a_{i_{r+1}-j-1}\cdots a_{i_{s-1}-j-1}a_{s-j-1}\in H_{v_{j+1}}\quad\text{and}\quad   a_{i_1-j-1}\cdots a_{i_{r-1}-j-1}a_{s-1}\in H_{v_{j+1}},
   \]
   so we have split $g$ into two non-trivial parts. 
   
    We now use the two parts of $g$ to split the parts of ${h_0}$ further. For clarity, let us first project to $v_s$. Here in $H_{v_s}$ we have the elements
   \[
   a_{j_1}\cdots a_{j_{d-2}}a_j,\qquad a_0^{-1}a_{j_{d+1}}\cdots a_{j_s},\qquad a_{i_1}\cdots a_{i_{r-1}}a_j,\qquad a_{i_{r+1}}\cdots a_{i_{s-1}}a_0.
   \]
   The left two elements are the two parts of ${h_0}$, and the right two are those of $g$. Without loss of generality, we replace  $H$ with $H_{v_s}$.

   \textit{Subcase (a)}: Suppose $1<r<s-1$. Let $k:=i_1$. Then either $k=j_{q}$ for ${q}\in[1,d-2]$ or  $k=j_{q}$ for ${q}\in[d+1,s]$. Suppose the former; a similar argument works for the latter. If ${q}>1$, we let~$\beta^{-1}$ be the $k$th level projection of $a_{j_1}\cdots a_{j_{d-2}}a_j$ (as usual along the path $t_{\infty}$) and $\alpha$ be that of $a_ka_{i_2}\cdots a_{i_{r-1}}a_j$, which by (ii) gives,
  upon replacing $H$ with $H_{v_s}$, the following elements  in $H$:
    \[
   a_{j_1}\cdots a_{j_{q-1}},\quad a_ka_{j_{q+1}}\cdots a_{j_{d-2}}a_j,\quad a_0^{-1}a_{j_{d+1}}\cdots a_{j_s},\quad a_ka_{i_2}\cdots a_{i_{r-1}}a_j,\quad a_{i_{r+1}}\cdots a_{i_{s-1}}a_0.
   \]
   
 If ${q}=1$, before splitting, we have instead the following elements in~$H$:
   \[
   a_ka_{j_2}\cdots a_{j_{d-2}}a_j,\qquad a_0^{-1}a_{j_{d+1}}\cdots a_{j_s},\qquad a_ka_{i_2}\cdots a_{i_{r-1}}a_j,\qquad a_{i_{r+1}}\cdots a_{i_{s-1}}a_0.
   \]
   Hence we let ${\ell}:=i_{r+1}$ and let $c\in [2,d-2]\cup[d+1,s]$ be such that $j_c={\ell}$. We consider the ${\ell}$th projection  of ${a_{\ell}a_{i_{r+2}}\cdots a_{i_{s-1}}a_0}$ multiplied accordingly with that of ${a_j^{-1}a_{j_{d-2}}^{-1} \cdots a_{j_2}^{-1}
   a_k^{-1}}$ or ${a_{j_s}^{-1} \cdots a_{j_{d+1}}^{-1} 
   a_0}$. This is situation (ii). 
   
   \textit{Subcase (b)}: Suppose $r=1$. Then we have the following elements in $H$:
    \[
   a_{j_1}\cdots a_{j_{d-2}},\qquad a_0^{-1}a_{j_{d+1}}\cdots a_{j_s},\qquad a_j,\qquad a_{i_{2}}\cdots a_{i_{s-1}}a_0.
   \]
   If $i_2\ne j_1$, let $k:={j_1}$, and we  proceed according to (ii), {with $\alpha$ being the $k$th projection of $a_ka_{j_2}\cdots a_{j_{d-2}}$ and $\beta$ that of $( a_{i_{2}}\cdots a_{i_{s-1}}a_0)^{-1}$.}
   If $i_2= j_1$, we let instead $k:=j_s$ and consider the $k$th projection of $(a_{i_{2}}\cdots a_{i_{s-1}}a_0)^{-1}$ multiplied with that of $a_0^{-1}a_{j_{d+1}}\cdots a_{j_{s-1}}a_k$; that is, situation~(i).
   
   \textit{Subcase (c)}: Suppose $r=s-1$. Here we have the following elements in $H$:
   \[
   a_{j_1}\cdots a_{j_{d-2}}a_j,\qquad a_{j_{d+1}}\cdots a_{j_s},\qquad a_{i_1}\cdots a_{i_{s-2}}a_j,\qquad a_0.
   \]
   If $i_1\ne j_1$, we let  $k:={j_1}$, and proceed as in (ii), {taking $\alpha$ to be the $k$th projection of $a_ka_{j_2}\cdots a_{j_{d-2}}a_j$ and $\beta$ that of $( a_{i_{1}}\cdots a_{i_{s-2}}a_j)^{-1}$.}
   If $i_1= j_1$, we instead let $k:=j_{d+1}$ and likewise following (ii) we consider the $k$th level projection of $
   a_ka_{j_{d+2}}\cdots a_{j_s}$  multiplied with that of $\big( a_{i_1}\cdots a_{i_{s-2}}a_j
   \big)^{-1}$.

    \medskip
    
  We aim to continue in this manner, using newly-formed parts of $g$ to split the existing parts of ${h_0}$, and then using the newly-formed parts of ${h_0}$ to split the existing parts of $g$.
 Observe also that if $a_i$, for some $i\in [0,s-1]$, is an isolated part of $g$ (that is, a part of $g$ of length one), then using (i) or (ii), one can further split the parts of $h_0$ to isolate $a_i$ from the parts of~$h_0$. Indeed, if $a_i$ or~$a_i^{-1}$ occurs as an endpoint of a part of~$h_0$, then it is clear. If $a_i$ is an interior point of a part $a_{r_1}\cdots a_{r_\xi} a_i a_{r_{\xi +1}}\cdots a_{r_{\xi+z}}$ of~$h_0$, then projecting to the $i$th level, we have
 \[
 \big(a_{r_1 -i}\cdots a_{r_\xi -i} a_0 a_{r_{\xi +1}-i}\cdots a_{r_{\xi+z}-i}\big)a_0^{-1}\in H_{v_i},
 \]
 and thus
 \[
 a_{s-1},\qquad a_{r_1 -i-1}\cdots a_{r_\xi -i-1},\qquad a_{r_{\xi +1}-i-1}\cdots a_{r_{\xi+z}-i-1}
 \]
 are elements of $H_{v_{i+1}}$, giving
 \[
 a_{i},\qquad a_{r_1}\cdots a_{r_\xi},\qquad a_{r_{\xi +1}}\cdots a_{r_{\xi+z}}
 \]
 in $H_{v_{i+s}}$. As usual, we then replace $H$ with $H_{v_{i+s}}$. We proceed similarly in the case when $a_i^{-1}$ is an interior point in a part of~$h_0$.

 Hence we may assume that the set of length one parts of $g$ is equal to the set of length one parts of $h_0$. Equivalently, the set of parts of $g$ of length at least two involve the same generators that appear in the parts of $h_0$ of length at least two. 
  
 If there are no parts of length at least two, then all generators have been isolated, and we are done, so assume otherwise. Suppose for now that the parts of $g$ of length at least two are labelled as follows:
   \[
   a_{e_1}*\cdots *a_{f_1},\quad  a_{e_2}*\cdots *a_{f_2}, \quad \ldots \quad, \quad a_{e_\mu}*\cdots *a_{f_\mu},
   \]
   for some $1\le \mu<s$, and similarly for $h_0$:
   \[
   a_{p_1}^{\gamma_1}*\cdots *a_{q_1}^{\lambda_1},\quad  a_{p_2}^{\gamma_2}*\cdots *a_{q_2}^{\lambda_2}, \quad \ldots \quad, \quad a_{p_\nu}^{\gamma_\nu}*\cdots *a_{q_\nu}^{\lambda_\nu},
   \]
    for some $1\le \nu<s$, with $\gamma_j=1$ if $j\in[1,s-1]$ and $\gamma_j=-1$ if $j=0$ and similarly for $\lambda_j$. Here~$*$ stands for unspecified elements in the alphabet~$S$. Write
    \[
    \mathcal{E}_g=\{(a_{e_1},a_{f_1}),\ldots, (a_{e_\mu},a_{f_\mu})\}
    \]
  for the set of ordered pairs of the so-called endpoint generators. If $a_0$ has not been isolated,   it follows that the corresponding set $\mathcal{E}_{h_0}$ of endpoint generator pairs for $h_0$ is of the form
     \[
    \mathcal{E}_{h_0}=\{(a_0^{-1},a_{q_1}),(a_{p_2},a_{q_2}),\ldots, (a_{p_\nu},a_{q_\nu})\},
    \]
    subject to reordering the parts of $h_0$. Indeed,
   else we may separate the parts further using (i).
    Without loss of generality, write
     \[
     \mathcal{E}_g=\{(a_{e_1},a_0),(a_{e_2},a_{f_2}),\ldots, (a_{e_\mu},a_{f_\mu})\}.
     \]
    Note that
    if \[
    \{p_2,\ldots,p_\nu\}\cup  \{q_1,\ldots,q_\nu\} \ne  \{e_1,\ldots,e_\mu\}\cup  \{f_2,\ldots,f_\mu\},  
    \]
     we may proceed as in (i) or (ii), since then an endpoint from a part of $g$ is an interior point in a part of $h_0$, or vice versa. Hence $\mu=\nu$ and 
    \[
    \{p_2,\ldots,p_{\mu}\}\cup  \{q_1,\ldots,q_{\mu}\} = \{e_1,\ldots,e_\mu\}\cup  \{f_2,\ldots,f_\mu\}.  
    \]
    Since
  $    \{p_2,\ldots,p_\mu\}$ has less elements than    $\{e_1,\ldots,e_\mu\}$, it follows that $e_i\in \{ q_1,\ldots,q_{\mu}\}$ for some $i\in [1,\mu]$. 
  Then we proceed as in (ii). Hence, if $a_0$ is not an isolated part of~$g$ (equivalently of~$h_0$), then we can continue splitting the parts of $g$ and $h_0$.
   
    So suppose now that $a_0$ has been isolated. As reasoned above, we have
   \[
    \mathcal{E}_g=\{(a_{e_1},a_{f_1}),\ldots, (a_{e_\mu},a_{f_\mu})\}
    \]
    and 
    \[
    \mathcal{E}_{h_0}=\{(a_{p_1},a_{q_1}),\ldots, (a_{p_\mu},a_{q_\mu})\}
    \]
    with
    \[
    \{e_1,\ldots,e_\mu\}\cup  \{f_1,\ldots,f_\mu\} =  \{p_1,\ldots,p_\mu\}\cup  \{q_1,\ldots,q_\mu\}.
    \]
    Similarly if
    \[
    \{e_1,\ldots,e_\mu\}{\cap}  \{q_1,\ldots,q_\mu\}{\ne \varnothing}, 
    \]
    we proceed as in (ii). So we assume that
    \[
    \{e_1,\ldots,e_\mu\} =  \{p_1,\ldots,p_\mu\}\quad\text{and}\quad \{f_1,\ldots,f_\mu\} =  \{q_1,\ldots,q_\mu\}.
    \]

   To proceed, we now consider the element $h_{e_1}$
   defined at the beginning of the proof. Proceeding as in (i) and (ii), we use the parts of $g$ and $h_0$ to split $h_{e_1}$ into parts, and if possible, we likewise use the parts of $h_{e_1}$ to further split the parts of $g$ and $h_0$. We claim that $a_{e_1}$ has been isolated through this process. Indeed,
   analogously to the considerations above for when $a_0$ was assumed to be an endpoint in $\mathcal{E}_g$,
   if we have
    \[
    \mathcal{E}_{h_{e_1}}=\{(a_{k_1},a_{e_1}^{-1}),(a_{k_2},a_{\ell_2}),\ldots, (a_{k_\eta},a_{\ell_\eta})\}
    \]
    and
     \[
     \mathcal{E}_g=\{(a_{e_1},a_{f_1}),(a_{e_2},a_{f_2}),\ldots, (a_{e_\eta},a_{f_\eta})\},
     \]
     where here $\eta\ge\mu$, and by abuse of notation we still write $e_i$ for the left endpoints and $f_i$ for the right endpoints for the parts of~$g$.
     Then, as seen before, there is some $f_i\in\{k_1,\ldots,k_{\eta}\}$ for $i\in[1,{\eta}]$, and we can proceed as in (i) or (ii). If instead $ (a_{e_1}^{-1},a_{\ell_1})\in \mathcal{E}_{h_{e_1}}$ then we multiply the $e_1$th projection of $a_{e_1}*\cdots *a_{f_1}$ with that of $a_{e_1}^{-1}*\cdots *a_{\ell_1}$ as in (ii). Lastly, if $a_{e_1}^{-1}$ is an interior point in~$\mathcal{E}_{h_{e_1}}$, then we proceed as in (ii). In other words, if $a_{e_1}$ is not an isolated part of~$g$ (equivalently of~${h_0}$ and of~$h_{e_1}$), then we can always continue splitting.

    By abuse of notation, we redefine $\mathcal{E}_g$ to be the new set of endpoint pairs, after this further splitting of the parts of $g$. If $\mathcal{E}_g\ne\varnothing$, pick a left endpoint $a_e$ for some ${(a_e,a_*)}\in\mathcal{E}_g$. 
    From working in a similar manner with the element $h_e$, we can isolate $a_e$.

   Proceeding in this manner, we  will end up  with all individual generators.  
   
   \bigskip
   
  \underline{Case 2:}  Suppose  $d=1$. Thus we have
  \[
   g=a_{i_1}\cdots a_{i_{s-1}}a_0\quad\text{and}\quad  h_0=a_0^{-1}a_{j_{2}}\cdots a_{j_s}.
  \]
   Write $i:=i_1$ and let $r\in[2,s]$ be such that $j_r=i$. As in situation (ii), we consider instead the $i$th projection of $g$ multiplied with that of $h_0^{-1}$. We now proceed as in Case 1 with the argument using the pairs of endpoints $\mathcal{E}_g$.

   \medskip

  \underline{Case 3:}  Suppose  $d=s$.      Here we proceed first using (i), and then following the argument laid out in Case 1.
      \end{proof}

 \begin{rmk}
\label{rmk:appendix}
For the case $t=0$, as mentioned above it is entirely analogous as compared to the case $t>0$. Whenever we had applied situation (i) in the case of $t>0$, we now use situation (ii) in the case $t=0$. After the first splitting, whenever
we consider new endpoints for the next splittings, where we had considered left
endpoints in the case $t>0$, we consider right endpoints in the case $t=0$, and vice versa.
\end{rmk}

\begin{rmk}
\label{rem:normal}  
Akin to \cite[Proposition~6.6]{DNT}, one can show that the group~$\mathfrak{K(v)}$ has a non-normal maximal subgroup of index~$q$, for infinitely many primes~$q$. Indeed, the group~$\mathfrak{K(v)}$ has a proper quotient isomorphic to $W_m(\mathbb{Z})$, where for $\mathcal{G}$ a group and $m\in\mathbb{N}_{\ge 2}$, we write $W_m(\mathcal{G})$ for the wreath product
of~$\mathcal{G}$ with a cyclic group of order~$m$. Writing  $L=\psi^{-1}(\mathfrak{K(v)}'\times \cdots \times \mathfrak{K(v)}')$ and  $N=L\langle a_0^m \rangle\trianglelefteq \mathfrak{K(v)}$, analogous to \cite[Lemma~6.4]{DNT} we have that $\mathfrak{K(v)}/N\cong W_m(\mathbb{Z}^{s-1})$, which has $W_m(\mathbb{Z})$ as a quotient group; compare also \cref{Theorem}{thm:properties}(iii).
\end{rmk}


\end{document}